\documentclass[a4paper,reqno,11pt,oneside]{amsart}
\usepackage{amsmath,geometry}
\usepackage{color}

\geometry{scale=0.8}
\numberwithin{equation}{section}

\newtheorem{thm}{Theorem}[section]

\newtheorem{lm}[thm]{Lemma}
\newtheorem{oss}[thm]{Remark}
\newtheorem{cor}[thm]{Corollary}

\parindent=0mm

\title{Harnack inequalities and quantization properties for the $n-$Liouville equation}

\author{Pierpaolo Esposito}
\address{Pierpaolo Esposito, Dipartimento di Matematica e Fisica, Universit\`a degli Studi Roma Tre, Largo S.~Leonardo Murialdo 1, Roma 00146, Italy.}
\email{esposito@mat.uniroma3.it}

\author{Marcello Lucia}
\address{Marcello Lucia, Department of Mathematics, College of Staten Island $\&$ The Graduate Center, The City University of New York, 365 Fifth Ave., New York, NY 10016, USA.}
\email{marcello.lucia@csi.cuny.edu}

\begin{document}

\begin{abstract}
We consider a quasilinear equation involving the $n-$Laplacian and an exponential nonlinearity, a problem that includes the celebrated Liouville equation in the plane as a special case. For a non-compact sequence of solutions it is known that the exponential nonlinearity converges, up to a subsequence, to a sum of Dirac measures. By performing a precise local asymptotic analysis we complete such a result by showing that the corresponding Dirac masses are quantized as multiples of a given one, related to the mass of limiting profiles after rescaling according to the classification result obtained by the first author in \cite{Esp}. A fundamental tool is provided here by some Harnack inequality of ``sup+inf" type, a question of independent interest that we prove in the quasilinear context through a new and simple blow-up approach.
\end{abstract}

\maketitle

\section{Introduction}
In the present paper we are concerned with solutions to 
\begin{equation} \label{Liouville-Quasilinear}
   -\Delta_n u = h(x) e^u  \hbox{ in } \Omega ,
\end{equation}
where $\Omega \subset \mathbb{R}^n$, $n \geq 2$, is a bounded open set  and $\Delta_n u = \hbox{div} ( |\nabla u|^{n-1} \nabla u) $ stands for the $n$-Laplace operator. Solutions are meant in a weak sense and by elliptic estimates \cite{Dib,Ser1,Tol} such solutions are in $C^{1,\alpha}(\Omega)$ for some $\alpha \in (0,1)$. 

\medskip
When $n=2$ problem~\eqref{Liouville-Quasilinear} reduces to the so-called Liouville equation, see \cite{Liou}, that represents the simplest case of ``Gauss curvature equation" on a two-dimensional surface arising in differential geometry.
In the higher dimensional case similar geometrical problems have led to different type of curvature equations. Recently, it has been observed that the $n$-Laplace operator comes into play 
when expressing the Ricci curvature after a conformal change of the metric \cite{MaQing}, leading to another class of curvature equations that are of relevance.  Moreover, the $n-$Liouville equation \eqref{Liouville-Quasilinear} represents a simplified version of a quasilinear fourth-order problem arising \cite{EsMa} in the theory of log-determinant functionals, that are relevant in the study of the conformal geometry of a $4-$dimensional closed manifold. In order to understand some of the bubbling phenomena that may occur in such geometrical contexts, we are naturally led to study the simplest situation given by~\eqref{Liouville-Quasilinear}.

\medskip Starting from the seminal work of Brezis and Merle~\cite{BrMerle} in dimension two, the asymptotic behavior of a sequence $u_k$ of solutions to
\begin{equation} \label{758}
-\Delta_n u_k=  h_k (x)  e^{u_k} \hbox{ in }\Omega,
\end{equation} 
with
\begin{equation} \label{758bis}
\sup_k \int_\Omega e^{u_k}<+\infty
\end{equation} 
and $h_k $ in the class
\begin{equation} \label{1059}
\Lambda_{a,b} =\{h  \in C (\Omega): \ a \leq h  \leq b \hbox{ in } \Omega\},
\end{equation}
can be generally described by a ``concentration-compactness" alternative. Extended \cite{AgPe} to the quasi-linear case, it reads as follows.

\medskip

{\bf Concentration-Compactness Principle:}
{\it Consider a sequence of functions $u_k$ such that \eqref{758}-\eqref{758bis} hold with $h_k \in \Lambda_{0,b}$. Then, up to a subsequence, the following alternative holds: 

\begin{enumerate}
\item[(i)] $u_k$ is bounded in $L^{\infty}_{loc} (\Omega)$;
\item[(ii)] $u_k \to - \infty$ locally uniformly in $\Omega$ as $k\to +\infty$;
\item[(iii)] the blow-up set $\mathcal{S}$ of the sequence $u_k$, defined as
$$
   \mathcal{S} = \{ p \in \Omega: \hbox{ there exists }  x_k \in \Omega \hbox{ s.t. } \, x_k \to p , u_k (x_k) \to \infty  \hbox{ as }k \to +\infty \},
$$
is finite, $u_k \to - \infty$ locally uniformly in $\Omega \setminus S$ and  
\begin{equation} \label{253}
h_k e^{u_k} \rightharpoonup \sum_{p \in {\mathcal S}} \beta_p \delta_p
\end{equation}
weakly in the sense of measures as $k \to +\infty$ for some coefficients $\beta_p \geq  n^{n} \omega_n$, 
where $\omega_n$ stands for the volume of the unit ball in $\mathbb R^n$.
\end{enumerate} 
}

\medskip 
The compact case, in which the sequence $e^{u_k}$ does converge locally uniformly in $\Omega$, is expressed by alternatives (i) and (ii), thanks to elliptic estimates \cite{Dib,Tol}; alternative (iii) describes the non-compact case and the characterization of the possible values for the Dirac masses $\beta_p$ becomes crucial towards an accurate description of the blow-up mechanism.

\medskip
When a boundary control on $u_k$ is assumed, the answer is generally very simple. If one assumes that the oscillation of $u_k$ on $\partial B_{\delta} (p)$, $p \in \mathcal S$, is uniformly bounded for some $\delta>0$ small, using a Pohozaev identity it has been shown \cite{EsMo} that  
$\beta_p = c_n \omega_n $, $c_n =n(\frac{n^2}{n-1})^{n-1}$, provided $h_k$ is in the class
\begin{equation} \label{1100}
\Lambda_{a,b}'=\{h  \in C^1 (\Omega): \ a\leq h \leq b,\ |\nabla h|  \leq b  \hbox{ in }\Omega\}
\end{equation}
with $a>0$. Moreover, in the two-dimensional situation and under the condition
\begin{equation} \label{eq:Peso1}
0\leq h_k  \to h \hbox{ in } C_{loc} (\Omega) \hbox{ as } k \to +\infty,
\end{equation}
a general answer has been found by Y.Y.~Li and Shafrir \cite{LiSh} showing that, for any $p \in \mathcal S$, $h(p)>0$ and the concentration mass $\beta_p$ is quantized as follows:
\begin{equation} \label{1011}
\beta_p \in 8\pi \mathbb{N}.
\end{equation}

\medskip \noindent The meaning of the value $8\pi$ in \eqref{1011} can be roughly understood as the sequence $u_k$ was developing several sharp peaks collapsing in $p$, each  of them looking like, after a proper rescaling, as a solution $U$ of 
\begin{equation} \label{Limiting2}
    - \Delta U = h(p) e^U \hbox{ in }\mathbb{R}^2,
    \quad
    \int_{\mathbb R^2} e^U < \infty,
\end{equation}
with $h(p)>0$. Using the complex representation formula obtained by Liouville \cite{Liou} or the more recent PDE approach by Chen-Li~\cite{ChenLi}, the solutions of~\eqref{Limiting2} are explicitly known and they all have the same mass: $\int_{\mathbb R^2} h(p) e^U = 8 \pi$. Therefore the value of $\beta_p$ in \eqref{1011}  just represents the sum of the masses $8\pi$ carried by each of such sharp peaks collapsing in $p$.

\medskip \noindent When $n\geq 3$ a similar classification result for solutions $U$ of
\begin{equation} \label{Limitingn}
    - \Delta_n U = h(p) e^U \hbox{ in }\mathbb{R}^n,
    \quad
    \int_{\mathbb R^n} e^U < \infty,
\end{equation}
with $h(p)>0$, has been recently provided by the first author in \cite{Esp}. For later convenience, observe in particular that the unique solution to
\begin{equation} \label{limitpb}
   -\Delta_n U=h(p) e^U \quad \hbox{ in }\mathbb{R}^n, \quad U \leq U(0)=0, \quad \int_{\mathbb{R}^n} e^U<+\infty ,
\end{equation}
is given by 
\begin{equation} \label{eq:Bubble}
   U(y)=-n \log \left(1+c_n^{-\frac{1}{n-1}} h(p)^{\frac{1}{n-1}}|y|^{\frac{n}{n-1}}\right) 
\end{equation} 
and satisfies
\begin{equation} \label{quantization}
    \int_{\mathbb{R}^n} h(p) e^U =c_n \omega_n , \quad c_n =n(\frac{n^2}{n-1})^{n-1}.
\end{equation}
Due to the invariance of \eqref{Limitingn} under translations and scalings,  all solutions to \eqref{Limitingn} are  given by the $(n+1)-$parameter family
$$
   U_{a, \lambda}  (y) = U \left(\lambda (y - a) \right) + n \log \lambda =   \log   \frac{ \lambda^n}{ ( 1 +  c_n^{-\frac{1}{n-1}} h(p)^{\frac{1}{n-1}} \lambda^{\frac{n}{n-1}} |y - a |^{\frac{n}{n-1}} )^n }  ,\quad (a,\lambda) \in \mathbb R^n \times (0,\infty),
$$
and satisfy $\int_{\mathbb R^n} h(p) e^{U}=c_n \omega_n $. As a by-product, under the condition 
\eqref{eq:Peso1} we necessarily have in \eqref{253} that 
\begin{equation} \label{1718}
\beta_p \geq c_n \omega_n, \end{equation}
a bigger value than the one appearing in the alternative (iii) of the Concentration-Compactness Principle.

\medskip
The blow-up mechanism that leads to the quantization result \eqref{1011} relies on an almost scaling-invariance property of the corresponding PDE, which guarantees that all the involved sharp peaks carry the same mass. Since it is also shared by the $n$-Liouville equation, a similar quantization property is expected to hold for the quasilinear case too:
\begin{equation} \label{10111}
\beta_p \in c_n\omega_n \mathbb{N}.
\end{equation}
However, the main point in proving \eqref{1011} is the limiting vanishing of the mass contribution coming from the neck regions between the sharp peaks. In the two-dimensional situation such crucial property follows by a Harnack inequality of $\sup+\inf$ type, established first in \cite{Shafrir} through an isoperimetric argument and an analysis of the mean average for a solution $u$ to \eqref{Liouville-Quasilinear}$_{n=2}$. A different proof can be given according to \cite{Rob} through Green's representation formula, see Remark \ref{commento} for more details, and a sharp form of such inequality has been later established in \cite{BrLiSh,ChenLin,ChenLi1} via isoperimetric arguments or moving planes/spheres techniques. However, all such approaches are not operating for the $n-$Liouville equation due to the nonlinearity of the differential operator; for instance, the Green representation formula is not anymore available in the quasilinear context and in the nonlinear potential theory an alternative has been found \cite{KiMa} in terms of the Wolff potential, which however fails to provide sharp constants as needed to derive precise asymptotic estimates on blowing-up solutions to \eqref{758}-\eqref{758bis}. We refer the interested reader to \cite{HKM,KuMi} for an overview on the nonlinear potential theory.
 
\medskip In order to establish the validity of \eqref{10111}, the first main contribution of our paper is represented by a new and very simple blow-up approach to $\sup+\inf$ inequalities. Since the limiting profiles have the form \eqref{eq:Bubble}, near a blow-up point we are able to compare in an  effective way a blowing-up sequence $u_k$ with the radial situation, in which sharp constants are readily available. Using the notations \eqref{1059} and \eqref{1100} our first main result reads as follows:
\begin{thm} \label{711intro} 
Given $0<a\leq b <\infty$, let $  \Lambda \subset \Lambda_{a,b}$ be a set which is equicontinous at each point of $\Omega$ and consider
$$   {\mathcal U} := 
   \{ u \in C^{1,\alpha} (\Omega) \, : \, u \hbox{ solves } \eqref {Liouville-Quasilinear} \hbox{ with }  h \in \Lambda \}.$$
Given a compact set $K \subset \Omega$ and $C_1>n-1$, then there exists $C_2 =C_2( \Lambda, K, C_1) >0$ so that
\begin{equation}\label{317bis}
        \max_K u+C_1 \inf_\Omega u \leq C_2,
        \quad
        \forall u \in {\mathcal U}.
\end{equation}
In particular, the inequality \eqref{317bis} holds for the solutions $u$ of \eqref{Liouville-Quasilinear} with $h \in \Lambda_{a,b}'$.
\end{thm}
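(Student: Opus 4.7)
The plan is to argue by contradiction: blow up at a scale-invariantly chosen concentration point, identify the limit with the radial bubble \eqref{eq:Bubble}, and contradict the assumed blow-up rate by invoking the explicit logarithmic tail of the bubble together with the sharp threshold $C_1>n-1$.

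Suppose the conclusion fails: for some compact $K\subset\Omega$ and some $C_1>n-1$ there exist sequences $u_k\in\mathcal U$ and $h_k\in\Lambda$ with $\max_K u_k+C_1\inf_\Omega u_k\to+\infty$. Since $C_1>0$ and $\max_K u_k\ge\inf_\Omega u_k$, necessarily $\max_K u_k\to+\infty$. I would then apply the classical scale-invariant selection to the functional $x\mapsto d(x,\partial\Omega')\,e^{u_k(x)/n}$ on an intermediate open set $\Omega'$ with $K\subset\Omega'\subset\overline{\Omega'}\subset\Omega$ (the natural choice from the invariance $u\mapsto u(\lambda\,\cdot)+n\log\lambda$ of the PDE). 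This produces $\bar x_k\to\bar x\in\overline{\Omega'}$ with $u_k(\bar x_k)\to+\infty$, and setting $\mu_k:=e^{-u_k(\bar x_k)/n}\to 0$ one has $d(\bar x_k,\partial\Omega')/\mu_k\to+\infty$. Rescale
\[
v_k(y):=u_k(\bar x_k+\mu_k y)-u_k(\bar x_k),
\]
so $v_k$ solves $-\Delta_n v_k=h_k(\bar x_k+\mu_k y)\,e^{v_k}$ on an expanding ball with $v_k(0)=0$ and $v_k\le o(1)$ on compact sets. Equicontinuity of $\Lambda$ gives, up to subsequences, $h_k(\bar x_k+\mu_k\,\cdot)\to h_\infty\in[a,b]$ locally uniformly; quasilinear elliptic regularity then produces $v_k\to V$ in $C^{1,\alpha}_{loc}(\mathbb R^n)$ solving $-\Delta_n V=h_\infty e^V$, $V(0)=0$, $V\le 0$ and $\int_{\mathbb R^n}e^V<\infty$. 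The classification of \cite{Esp} identifies $V$ with \eqref{eq:Bubble}, in particular $V(y)=-\frac{n^2}{n-1}\log|y|+O(1)$ as $|y|\to\infty$.

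The proof then reduces to exhibiting a macroscopic point $z_k\in\Omega$ with
\[
u_k(z_k)\le -\frac{u_k(\bar x_k)}{n-1}+C
\]
for some $C$ independent of $k$. Granting this and combining with $\max_K u_k\le u_k(\bar x_k)+O(1)$ (again from the scale-invariant selection), one obtains
\[
\max_K u_k+C_1\inf_\Omega u_k\le u_k(\bar x_k)\Bigl(1-\frac{C_1}{n-1}\Bigr)+O(1),
\]
which tends to $-\infty$ since $C_1>n-1$, contradicting the assumption. The value $-u_k(\bar x_k)/(n-1)$ is exactly what the bubble tail predicts at a macroscopic distance: at $|x-\bar x_k|\sim r_0$ one has $|y|\sim r_0/\mu_k$, so if $v_k$ were comparable to $V$ at that scale, $u_k(x)\approx u_k(\bar x_k)-\frac{n^2}{n-1}\log(1/\mu_k)=-u_k(\bar x_k)/(n-1)+O(1)$.

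The main obstacle is precisely this ``if'': the $C^{1,\alpha}_{loc}$ convergence $v_k\to V$ controls $v_k$ only on compact balls $B_R\subset\mathbb R^n$, i.e., at distances $R\mu_k\to 0$, whereas the target bound needs $v_k$ at the scale $|y|\sim 1/\mu_k$. The paper's announced ``simple'' comparison input must close this mesoscopic-to-macroscopic gap: I expect it to amount to constructing an explicit radial supersolution built from the bubble profile \eqref{eq:Bubble} (translated to $\bar x_k$ and mildly dilated to absorb the $h_k$-dependence via equicontinuity), matching it with $u_k$ on an inner mesoscopic sphere using the blow-up convergence, and then applying the comparison principle for the $n$-Laplace operator in the annular region separating the mesoscopic from the macroscopic scale. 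The explicit logarithmic exponent $n^2/(n-1)$ of the bubble is exactly what generates the sharp threshold $C_1>n-1$. Finally, the stated conclusion for $h\in\Lambda_{a,b}'$ follows immediately: the uniform bound $|\nabla h|\le b$ makes $\Lambda_{a,b}'$ equicontinuous at every point of $\Omega$.
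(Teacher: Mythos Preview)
Your overall architecture---blow up, identify the limiting profile via the classification in \cite{Esp}, and exploit the exact logarithmic rate $n^2/(n-1)$ against the threshold $C_1>n-1$---matches the paper's. But the device you propose for bridging the mesoscopic and macroscopic scales is the wrong way round, and this is a genuine gap. A \emph{supersolution} comparison on the annulus $B_\delta(x_k)\setminus B_{R\mu_k}(x_k)$ requires ordering on \emph{both} boundary components, and on $\partial B_\delta(x_k)$ you have no a priori control of $u_k$---that is precisely the quantity you are after. It also requires $-\Delta_n u_k\le -\Delta_n\phi$ in the annulus, which involves $h_k e^{u_k}$ where $u_k$ is unknown; a sliding argument only yields a lower bound on the sliding parameter at an interior touching point, not the upper bound you need. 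The paper instead runs a \emph{subsolution} comparison on the full ball $B_\delta(x_k)$ (its Lemma~\ref{radial}): one takes $f_k=(1-\varepsilon)h(p)\,e^{U_{x_k,\mu_k^{-1}}}\chi_{B_{R\mu_k}(x_k)}$, so that $h_k e^{u_k}\ge f_k$ holds in $B_{R\mu_k}(x_k)$ by the blow-up convergence and \emph{trivially} outside by the cutoff, and then compares $u_k-\inf_{B_\delta}u_k$ with the explicit radial solution of $-\Delta_n u_0=f_k$ vanishing on $\partial B_\delta$. This gives
\[
u_k(x_k)-\inf_{B_\delta(x_k)}u_k\ \ge\ \Bigl(\tfrac{1-\varepsilon}{n\omega_n}\!\int_{B_R}h(p)e^U\Bigr)^{\!\frac{1}{n-1}}\log\tfrac{\delta}{R\mu_k},
\]
and since $(n\omega_n)^{-1}\int_{\mathbb{R}^n}h(p)e^U=(n^2/(n-1))^{n-1}$ and $\log(1/\mu_k)=u_k(x_k)/n$, letting $R\to\infty$, $\varepsilon\to0$ produces exactly the coefficient $n/(n-1)$ in front of $u_k(x_k)$ that contradicts $\inf_\Omega u_k\ge -u_k(x_k)/C_1$ when $C_1>n-1$. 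The key point is that a subsolution can be set to zero wherever nothing is known; a supersolution cannot.

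There is a second gap earlier in your argument: you assert $\int_{\mathbb{R}^n}e^V<\infty$ for the blow-up limit, but the classification in \cite{Esp} is stated under that hypothesis and your scale-invariant selection only delivers a local upper bound $v_k\le C$, hence $C^{1,\alpha}_{loc}$ compactness, not global integrability of $e^V$. (Note also that Theorem~\ref{711intro} carries no assumption of the type $\int_\Omega e^{u_k}\le C$.) The paper secures the missing mass bound \emph{before} rescaling, via the nonlinear Wolff-potential estimate $u(\bar x)-\inf_\Omega u\ge c_1\bigl(\int_{B_r(\bar x)}he^u\bigr)^{1/(n-1)}\log(\delta/r)$ combined with the contradiction hypothesis in the form $u_k(\bar x_k)-\inf_\Omega u_k\le(1+1/C_1)\,u_k(\bar x_k)$; this caps $\int_{B_{R\bar\mu_k}(\bar x_k)}h_ke^{u_k}$ uniformly in $R$, allowing the Concentration--Compactness alternative to be run on the rescaled sequence and then the classification to be invoked after recentering at a genuine local maximum.
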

By combining the $\sup+ \inf-$inequality  with a careful blow-up analysis, we are able to prove our second main result:
\begin{thm} \label{thm:Main}
Let $u_k$ be a sequence of solutions to \eqref{758} so that \eqref{758bis}-\eqref{253} hold. If one assumes~\eqref{eq:Peso1}, then $h(p)>0$ and $\beta_p $ satisfies \eqref{10111} for any $p \in \mathcal S$.
\end{thm}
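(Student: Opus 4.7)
Fix $p \in \mathcal S$ and pick $\delta > 0$ with $\bar B_{2\delta}(p) \cap \mathcal S = \{p\}$, so that $u_k \to -\infty$ uniformly on $\bar B_\delta(p) \setminus B_\rho(p)$ for every $\rho \in (0,\delta)$. Choose $x_k \to p$ with $M_k := \max_{\bar B_\delta(p)} u_k = u_k(x_k) \to +\infty$ and set $\lambda_k := e^{-M_k/n}$, $v_k(y) := u_k(x_k + \lambda_k y) + n\log\lambda_k$. Then $v_k$ solves $-\Delta_n v_k = h_k(x_k + \lambda_k y)\,e^{v_k}$ on $B_{\delta/\lambda_k}(0)$, with $v_k \le v_k(0) = 0$ and $\int e^{v_k} \le \int_{B_\delta(x_k)} e^{u_k}$ uniformly bounded. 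By standard $C^{1,\alpha}_{loc}$ estimates for the $n$-Laplacian \cite{Dib,Tol} and the convergence $h_k \to h$ in $C_{loc}$, a subsequence converges in $C^{1,\alpha}_{loc}$ to a solution $V$ of $-\Delta_n V = h(p)\,e^V$ on $\mathbb R^n$, with $V \le V(0) = 0$ and $\int e^V < \infty$. If $h(p) = 0$, then $V$ would be $n$-harmonic with interior maximum at the origin, hence $V \equiv 0$ by the strong maximum principle, contradicting $\int e^V < \infty$; so $h(p) > 0$. The classification \eqref{limitpb}--\eqref{eq:Bubble} identifies $V$ with the standard bubble, and \eqref{quantization} yields
\[
   \lim_{R\to\infty}\,\lim_{k\to\infty} \int_{B_{R\lambda_k}(x_k)} h_k\, e^{u_k}\,dx = c_n\omega_n,
\]
so the first bubble contributes exactly $c_n\omega_n$ to $\beta_p$.

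If $\beta_p = c_n\omega_n$ we are finished. Otherwise the residual mass must concentrate in the shrinking annular neck $\{R\lambda_k < |x - x_k| < \delta\}$ as $R \to \infty$, and the plan is to iterate the bubble-extraction in the style of Li--Shafrir \cite{LiSh}, adapted to the quasilinear framework. Since $h(p) > 0$ and $h_k \to h$ uniformly near $p$, the family $\{h_k\}$ sits in an equicontinuous subset of some $\Lambda_{a,b}$ on a neighborhood of $p$, so Theorem \ref{711intro} is available with a $k$-uniform constant. The $\sup+\inf$ inequality, applied to the rescaled $\tilde u_k(y) := u_k(x_k + \rho_k y) + n\log\rho_k$ at successive intermediate scales $\rho_k$ between putative bubble scales, is used (i) to exclude ``bubble towers'' at the same centre — configurations whose rescaled asymptotics would violate the strict constraint $C_1 > n-1$ — and (ii) to force, on each intermediate scale at which no further concentration occurs, a geometric decay of the mass of $h_k e^{u_k}$ on the corresponding dyadic annulus. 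Combined, these ensure that between consecutive bubbles the neck mass is negligible, while each newly extracted bubble again contributes exactly $c_n\omega_n$ by the argument of the first paragraph applied at the relevant centre and scale.

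Since $\beta_p < \infty$ by \eqref{758bis}, the iteration terminates after finitely many steps, yielding $\beta_p = N c_n\omega_n$ for some integer $N \ge 1$, which is the desired quantization \eqref{10111}. The principal obstacle lies in the second paragraph: converting the rescaled $\sup+\inf$ control of Theorem \ref{711intro} into a genuine $L^1$-vanishing of $h_k e^{u_k}$ on the annular necks. The strict inequality $C_1 > n-1$ is precisely what is needed to upgrade a uniform $O(1)$ mass estimate on each dyadic annulus to $o(1)$ decay, and to prevent the accumulation of a coarse bubble on top of a finer one at the same centre. In the quasilinear setting this is particularly delicate since the Green representation formula and conformal tools available for $n = 2$ have no counterparts for $n \ge 3$, so Theorem \ref{711intro} is the only available handle, and a careful choice of the intermediate scales $\rho_k$ throughout the induction is where the bulk of the technical work is expected to lie.
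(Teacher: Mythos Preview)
Your strategy is correct at the coarse level and aligns with the paper's: extract bubbles, each carrying mass $c_n\omega_n$, and show the neck mass vanishes via the $\sup+\inf$ inequality. The first paragraph is fine and essentially reproduces the opening of the paper's Theorem~\ref{352}. But the second paragraph is a plan, not a proof---as you yourself acknowledge---and the mechanism you sketch is not quite how the argument actually runs.

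The paper fills the gap as follows. Rather than iterating bubble-by-bubble, Theorem~\ref{352} (proved in the Appendix) extracts \emph{all} bubble centres $x_k^1,\dots,x_k^N$ at once and, crucially, delivers the global pointwise estimate $\min_i|x-x_k^i|^n e^{u_k}\le C$ on $B_{2\delta}(0)$. The $\sup+\inf$ inequality is then combined with the ordinary Harnack inequality and packaged into Corollary~\ref{decay}: on any annulus where $|x-x_k|^n e^{u_k}\le C$, one obtains the explicit pointwise decay $u_k\le C-\frac{\alpha}{n}u_k(x_k)-(n+\alpha)\log|x-x_k|$ for some $\alpha>0$, and direct integration of $e^{u_k}$ against this bound gives the $L^1$ vanishing on that neck---no dyadic summation is needed. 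Finally, Theorem~\ref{3522} runs an induction on $N$ by grouping the centres according to their minimal mutual distance $d_k$ and rescaling by $d_k$; this is where the ``choice of intermediate scales'' you allude to actually lives, and it is quite clean once the pointwise estimate and Corollary~\ref{decay} are in hand. One further correction: your item (i) (``exclude bubble towers at the same centre via $\sup+\inf$'') misdescribes the mechanism. The separation condition $(\mu_k^i+\mu_k^j)/|x_k^i-x_k^j|\to 0$ is produced by the selection procedure in Theorem~\ref{352}, not by the $\sup+\inf$ inequality; the latter is used solely for the neck decay.
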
 

Our paper is structured as follows. Section~\ref{sec:SupInf} is devoted to establish the $\sup+\inf$ inequality.  Starting from a basic description of the blow-up mechanism, reported in the appendix for reader's convenience, a refined asymptotic analysis is carried over in Section~\ref{sec:Simple} to establish Theorem~\ref{thm:Main} when the blow-up point is ``isolated", according to some well established terminology as for instance in \cite{YYLi}. The quantization result in its full generality will be the object of Section~\ref{sec:Quantization}.

%%%%%%%%%%%%%%%%%%%%%%%%%%%%
\section{The $\sup+\inf$ inequality} \label{sec:SupInf}
\noindent 
%%%%%%%%%%%%%%%%%%%%%%%%%%%%

When $n=2$ the so-called ``sup+inf" inequality has been first derived  by Shafrir~\cite{Shafrir}: given $a,b>0$ and $K \subset \Omega$ a non-empty compact set, there exist constants $C_1,C_2> 0 $ so that 
\begin{equation} \label{0317}
\sup_K u + C_1 \inf_{\Omega} u \leq C_2
\end{equation}
does hold for any solution $u$ of \eqref{Liouville-Quasilinear}$_{n=2}$ with $0<a\leq h \leq b$ in $\Omega$; moreover one can take $C_1=1$ if $h\equiv 1$. Later on, Brezis, Li and Shafrir showed \cite{BrLiSh} the validity of \eqref{0317} in its sharp form with $C_1=1$ for any $h \in \Lambda_{a,b}'$, $a>0$. 

\medskip \noindent To reach this goal, a first tool needed is a general Harnack inequality that holds for solutions $u$ of $-\Delta_n u=f \geq 0 $ in $\Omega$. By means of the so-called nonlinear Wolff potential in \cite{KiMa} it is proved that there exists a constant $c_1>0$ such that
$$u(x)-\inf_{\Omega} u \geq c_1 \int_0^{\delta} \Big[ \int_{B_t(x)} f \Big]^{\frac{1}{n-1}} \frac{dt}{t} $$
holds for each ball $B_{2\delta}(x) \subset \Omega$. Since $f \geq 0$, note that the above inequality implies that
\begin{equation} \label{0617bis}
u(x)-\inf_{\Omega} u \geq c_1 \Big[ \int_{B_r(x)} f \Big]^{\frac{1}{n-1}} \log \frac{\delta}{r}
\end{equation}
for all $0<r<\delta$. The constant $c_1$ is not explicit and our argument could be significantly simplified if we knew $c_1= (n \omega_n)^{-\frac{1}{n-1}}$ , see Remark \ref{commento} for a thourough discussion. However,  in the class of radial functions, the following  lemma shows that indeed \eqref{0617bis} holds with the sharp constant $c_1=(n \omega_n)^{-\frac{1}{n-1}}$:
\begin{lm} \label{radial}
Let $u \in C^1 (B_{R_2}(a))$ and $0\leq f \in C(\overline{B_{R_2}(a)}) $ a radial function with respect to $a \in \mathbb{R}^n$ so that
$$-\Delta_n u \geq f \quad \hbox{in } B_{R_2}(a).$$
Then
\begin{equation} \label{1122}
u(a)-\inf_{B_{R_2}(a)} u \geq  (n \omega_n)^{-\frac{1}{n-1}} \int_0^{R_2} \left( \int_{B_{t} (a)} f   \right)^{\frac{1}{n-1}} \frac{dt}{t}.
\end{equation}
In particular, for each $0<R_1<R_2$ there holds
$$u(a)-\inf_{B_{R_2}(a)} u \geq (n \omega_n)^{-\frac{1}{n-1}} \left(  \int_{B_{R_1} (a)} f   \right)^{\frac{1}{n-1}} \log \frac{R_2}{R_1}.$$
\end{lm}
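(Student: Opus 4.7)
My plan is to establish \eqref{1122} by comparing $u$ from below with a well-chosen radial solution of the corresponding equation with equality, and to obtain the pointwise bound via the weak comparison principle for the $n$-Laplacian. Translate so that $a = 0$, and set $m := \inf_{B_{R_2}(0)} u$, assuming $m > -\infty$ (otherwise \eqref{1122} is trivial). For each $0 < r < R_2$, the continuous function $u$ attains its minimum $m_r \geq m$ on the compact set $\overline{B_r(0)} \subset B_{R_2}(0)$, and $m_r \searrow m$ as $r \nearrow R_2$.

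The next step is to introduce the explicit radial function
$$V_r(\rho) := m_r + (n\omega_n)^{-\frac{1}{n-1}} \int_\rho^r \left( \int_{B_s(0)} f \right)^{\frac{1}{n-1}} \frac{ds}{s}, \qquad \rho \in [0,r].$$
Since $f \geq 0$ is continuous and $\int_{B_s(0)} f = O(s^n)$ as $s \to 0^+$, one has $V_r \in C^1([0,r])$ with $V_r'(0) = 0$ and $V_r(r) = m_r$. Writing $F(s) := \int_{B_s(0)} f$, the identity $-V_r'(s) = \bigl(F(s)/(n\omega_n s^{n-1})\bigr)^{1/(n-1)}$ yields by a one-line computation $\bigl(s^{n-1}(-V_r'(s))^{n-1}\bigr)' = s^{n-1} f(s)$, which is precisely the ODE satisfied by radial solutions of $-\Delta_n V_r = f$; hence $V_r$ is the radial classical solution of that equation in $B_r(0)$ with boundary value $m_r$.

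Since $-\Delta_n u \geq f = -\Delta_n V_r$ weakly in $B_r(0)$ and $u \geq m_r = V_r$ on $\partial B_r(0)$, the standard weak comparison principle for the $n$-Laplacian — proved by testing the integral inequality against $\phi := (V_r - u)_+ \in W_0^{1,n}(B_r(0))$ and using the strict monotonicity of the underlying vector field — delivers $u \geq V_r$ throughout $B_r(0)$. Evaluating at the origin gives
$$u(0) \geq V_r(0) = m_r + (n\omega_n)^{-\frac{1}{n-1}} \int_0^r \left( \int_{B_s(0)} f \right)^{\frac{1}{n-1}} \frac{ds}{s},$$
and letting $r \nearrow R_2$, monotone convergence produces \eqref{1122}. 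The final assertion of the lemma is then immediate: since $F$ is nondecreasing, $F(s) \geq F(R_1)$ for $s \in [R_1, R_2]$, and restricting the $s$-integral to this sub-interval yields the claimed factor $\log(R_2/R_1)$.

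The main technical point is the application of the weak comparison principle in this quasilinear setting; it is classical, but must be handled with care regarding the regularity of $V_r$ at the origin (guaranteed by $V_r'(0) = 0$) and the admissibility of $(V_r - u)_+$ as a test function in $W_0^{1,n}(B_r(0))$, which is ensured by $u, V_r \in C(\overline{B_r(0)})$ together with the ordering $u \geq V_r$ on $\partial B_r(0)$.
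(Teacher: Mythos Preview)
Your proof is correct and follows essentially the same approach as the paper: construct the explicit radial solution of $-\Delta_n V = f$ and invoke the weak comparison principle for the $n$-Laplacian. The only difference is cosmetic: the paper compares $u-\inf_{B_{R_2}(a)}u$ directly with the radial solution $u_0$ on $B_{R_2}(a)$ with zero boundary data, whereas you work on compactly contained balls $B_r(0)$ and pass to the limit $r\nearrow R_2$; this is a slightly more careful treatment of the fact that $u\in C^1$ only on the open ball, but the underlying idea is identical.
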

\begin{proof} Consider the radial solution $u_0$ solving
$$-\Delta_n u_0=f \hbox{ in }B_{R_2}(a),\quad u_0=0 \hbox{ on }\partial B_{R_2}(a).$$
Since 
$$-\Delta_n u \geq -\Delta_n u_0 \hbox{ in }B_{R_2}(a),\quad u-\inf_{B_{R_2}(a)} u \geq u_0 \hbox{ on }\partial B_{R_2}(a),$$
by comparison principle there holds
\begin{equation} \label{1127}
u-\inf_{B_{R_2}(a)} \geq u_0 \qquad \hbox{in }B_{R_2}(a).
\end{equation}
Furthermore, $u_0$ is radial with respect to $a$ and can be explicitly written as $(r=|x-a|)$:
\begin{equation} \label{1130}
u_0(r)=\int_r^{R_2} \left( \int_0^t s^{n-1}f(s) ds  \right)^{\frac{1}{n-1}} \frac{dt}{t} =
\int_r^{R_2} \left( \frac{1}{n \omega_n}  \int_{B_t(a)} f  \right)^{\frac{1}{n-1}} \frac{dt}{t} .
\end{equation}
By \eqref{1127}-\eqref{1130} we deduce the validity of \eqref{1122}. Since the function $t \to \int_{B_t(a)}f$ is non decreasing in view of $f \geq 0$, we have for each $0<R_1<R_2$ that
\begin{eqnarray*} 
u_0(a) \geq (n \omega_n)^{-\frac{1}{n-1}}
 \int_{R_1}^{R_2} \left( \int_{B_t(a)} f   \right)^{\frac{1}{n-1}} \frac{dt}{t} 
 \geq (n \omega_n)^{-\frac{1}{n-1}}
   \left( \int_{B_{R_1} (a)} f   \right)^{\frac{1}{n-1}} \log \frac{R_2}{R_1}
\end{eqnarray*}
and the proof is complete thanks to \eqref{1127}.
\end{proof}

\medskip This lemma is helpful to extend \eqref{0317} to the quasilinear case for all $C_1>n-1$ and this will be enough to establish the quantization result \eqref{10111}. It is an interesting open question to know whether or not the sharp inequality with $C_1=n-1$ is valid for a reasonable class of weights $h$, as when $n=2$ \cite{BrLiSh}. 
The $\sup+\inf$ inequality in Theorem \ref{711intro} will be an immediate consequence of the following result.
\begin{thm} \label{711} 
Let $0 < a \leq b < \infty$, consider the sets  $\Lambda$ and ${\mathcal U}$ defined in Theorem \ref{711intro}. Then given 
$K \subset \Omega$ a nonempty compact set and $C_1>n-1$, there exists a constant $C_3 >0$ such that 
$\displaystyle \max_K u \leq C_3$ holds for all $u \in {\mathcal U}$ satisfying 
$\displaystyle \max_K u + C_1 \inf_\Omega u \geq 0$
(Theorem \ref{711intro} follows by taking $C_2 = C_1 + C_3$). 
\end{thm}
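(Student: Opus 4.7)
My plan is a proof by contradiction that leverages the sharp constant in Lemma~\ref{radial}. Suppose no such $C_3$ exists: then we find $u_k\in\mathcal{U}$ with $h_k\in\Lambda$ such that $m_k:=\max_K u_k\to+\infty$ while $\inf_\Omega u_k\ge -m_k/C_1$. Let $x_k\in K$ realize $u_k(x_k)=m_k$ and set $\lambda_k:=e^{-m_k/n}\to 0^+$; after passing to a subsequence and using the equicontinuity of $\Lambda$ together with $a\le h_k\le b$, we obtain $h_k\to h$ in $C_{\rm loc}(\Omega)$ with $a\le h\le b$, $x_k\to p\in K$, and $h_k(x_k+\lambda_k\cdot)\to h(p)$ locally uniformly on $\mathbb{R}^n$. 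A standard peak-selection (applied after possibly enlarging $K$ to a slightly bigger compact subset of $\Omega$) lets us assume that the rescaled function $v_k(y):=u_k(x_k+\lambda_k y)-m_k$ satisfies $v_k(0)=0$ and $v_k\le 0$ on $B_{s_k}(0)$ with $s_k\to+\infty$, while still solving $-\Delta_n v_k=h_k(x_k+\lambda_k\cdot)e^{v_k}$. The gradient estimates of \cite{Dib,Tol} together with the classification of \cite{Esp} then yield $v_k\to V$ in $C^{1,\alpha}_{\rm loc}(\mathbb{R}^n)$, where $V$ is the standard bubble~\eqref{eq:Bubble} centered at $0$ with $\int_{\mathbb{R}^n} h(p)e^V=c_n\omega_n$.

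Next, fix $\varepsilon>0$, pick $R=R_\varepsilon$ so that $\int_{B_R} h(p)e^V\ge(1-\varepsilon)c_n\omega_n$, and set $\delta:=\tfrac{1}{2}\,\mathrm{dist}(K,\partial\Omega)$. The $C^0_{\rm loc}$ convergence of $v_k\to V$ and $h_k\to h$ on $B_{R\lambda_k}(x_k)$ produces $\eta_k\to 0^+$ such that the continuous radial function
\begin{equation*}
f_k(x):=(h(p)-\eta_k)\exp\!\Big[m_k+V\big(\tfrac{x-x_k}{\lambda_k}\big)-\eta_k\Big]\,\chi_{B_{R\lambda_k}(x_k)}(x)
\end{equation*}
(mollified near $\partial B_{R\lambda_k}(x_k)$ if needed for the continuity hypothesis of Lemma~\ref{radial}) satisfies $0\le f_k\le h_k e^{u_k}=-\Delta_n u_k$ on $B_\delta(x_k)\subset\Omega$. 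Applying Lemma~\ref{radial} with $a=x_k$, $R_2=\delta$, and keeping only the integration range $t\in[R\lambda_k,\delta]$ on which $\int_{B_t(x_k)}f_k$ equals the total mass of $f_k$, yields
\begin{equation*}
m_k-\inf_{B_\delta(x_k)} u_k\ \ge\ (n\omega_n)^{-\frac{1}{n-1}}\Big(\int_{B_{R\lambda_k}(x_k)} f_k\Big)^{\!\frac{1}{n-1}}\log\frac{\delta}{R\lambda_k}.
\end{equation*}

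Using $\int f_k\ge(1-\varepsilon-o(1))c_n\omega_n$, the identity $(c_n/n)^{1/(n-1)}=n^2/(n-1)$, and $\log(\delta/(R\lambda_k))=m_k/n+O(1)$, the right-hand side becomes $\tfrac{n}{n-1}(1-\varepsilon)^{\frac{1}{n-1}}m_k+o(m_k)$. Combining this with $\inf_{B_\delta(x_k)} u_k\ge\inf_\Omega u_k\ge -m_k/C_1$ we deduce
\begin{equation*}
\Big(1+\tfrac{1}{C_1}\Big)m_k\ \ge\ \tfrac{n}{n-1}(1-\varepsilon)^{\frac{1}{n-1}}m_k+o(m_k).
\end{equation*}
Dividing by $m_k$ and letting first $k\to+\infty$ and then $\varepsilon\to 0^+$ gives $1+1/C_1\ge n/(n-1)$, i.e.\ $C_1\le n-1$, contradicting the hypothesis $C_1>n-1$.

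I expect the main obstacle to be conceptual rather than computational. On the technical side one must execute the peak-selection carefully, so that the blow-up limit is a genuine entire bubble of finite mass (and not a degenerate or half-space-like profile); this is essentially the basic blow-up analysis the authors relegate to the appendix. Conceptually, the critical threshold $C_1>n-1$ is dictated entirely by the \emph{sharp} constant $(n\omega_n)^{-\frac{1}{n-1}}$ of Lemma~\ref{radial}: any strictly smaller constant, such as the Wolff-type $c_1$ of \eqref{0617bis}, would fail to reach down to $C_1=n-1$. The whole role of the construction above is therefore to build a radial under-majorant $f_k$ directly out of the limiting bubble $V$, so that the non-radial situation is reduced to a radial comparison where the sharp constant is available.
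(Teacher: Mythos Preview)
Your strategy is the paper's: contradict, blow up, build a radial minorant from the limiting bubble, and invoke Lemma~\ref{radial} with its sharp constant $(n\omega_n)^{-1/(n-1)}$ to force $C_1\le n-1$. The endgame inequality is the same computation.

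The gap is in the clause ``the gradient estimates \ldots\ together with the classification of \cite{Esp} then yield $v_k\to V$ \ldots\ the standard bubble'', and it is not handled by the appendix as you suggest. A peak-selection of the sort you describe (e.g.\ maximizing $u_k+n\log\operatorname{dist}(\cdot,\partial K')$ over a compact $K'\supset K$ in $\Omega$) does produce $v_k\le C$ on balls of radius $s_k\to\infty$, and then \cite{Dib,Tol} deliver $v_k\to V$ in $C^1_{\mathrm{loc}}$ with $-\Delta_n V=h(p)e^V$ on $\mathbb{R}^n$. But the classification in \cite{Esp} requires $\int_{\mathbb{R}^n}e^V<\infty$, and Theorem~\ref{711} carries \emph{no} global mass assumption like~\eqref{758bis}; the appendix blow-up (Theorem~\ref{352}) is run precisely under~\eqref{758bis} and so does not apply here. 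The paper obtains the missing mass bound by first applying the non-sharp Wolff estimate~\eqref{0617bis} at the maximum point: combined with the standing hypothesis $\max_K u_k+C_1\inf_\Omega u_k\ge 0$ this yields the uniform local bound~\eqref{1224}, which after rescaling both forces $\int e^V<\infty$ and allows the Concentration--Compactness alternative to be run on the rescaled sequence (its case~(ii), where the first rescaling concentrates rather than stays bounded, is something your peak-selection would in fact bypass, but the finite-mass issue remains). Once you insert~\eqref{0617bis} at your peak-selected point to bound $\limsup_k\int_{B_{R\lambda_k}(x_k)}h_k e^{u_k}$ uniformly in $R$, your argument is complete.
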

\begin{proof} Choose $\delta>0$ so that $K_\delta=\{ x \in \Omega:\ \hbox{dist}(x,K) \leq 2 \delta\} \subset \Omega$. 
Let $u$ be a solution to \eqref{Liouville-Quasilinear} with $h \geq 0$ so that
\begin{equation} \label{1253}
    \max_K u+C_1 \inf_\Omega u \geq 0.
\end{equation}
Denote by $\bar x \in K$ a maximum point of $u$ in $K$: $u(\bar x)=\displaystyle \max_K u$. 
It follows from \eqref{0617bis} that
\begin{equation} \label{0617}
u(\bar x)-\inf_{\Omega} u \geq  c_1 \Big[ \int_{B_r(\bar x)} h e^{u} \Big]^{\frac{1}{n-1}} \log \frac{\delta}{r} 
\end{equation}
for all $0<r<\delta$ in view of $B_{2\delta} (\bar x) \subset \Omega$. Therefore, we deduce that
    $$ c_1  \Big[  \int_{B_r(\bar x)} h e^{u} \Big] ^{\frac{1}{n-1}}  
     \leq 
      \left\{ 
     \frac{u(\bar x)-\displaystyle \inf_{\Omega} u}{\log \frac{\delta}{r}} 
     \right\} \leq (1+\frac{1}{C_1}) \frac{u(\bar x)}{\log \frac{\delta}{r}}
$$
for all $0<r<\delta$ in view of \eqref{1253}-\eqref{0617}.

\medskip \noindent Arguing by contradiction, if the conclusion of the theorem is wrong, we can find a sequence 
$u_k \in {\mathcal U}$ satisfying \eqref{1253} such that, as $k \to \infty$, we have
\begin{equation} \label{blow}
      \max_K u_k \to +\infty.
\end{equation}
Letting $\bar x_k \in K$ so that $u_k(\bar x_k)=\displaystyle \max_K u_k$ and $\bar \mu_k =e^{-\frac{u_k(\bar x_k)}{n}}$,  we have that $\bar \mu_k \to 0$ as $k \to +\infty$ in view of \eqref{blow}. Since for each $R>0$ we can find $k_0 \in \mathbb{N}$ so that $R\bar \mu_k < \delta$ for all $k \geq k_0$,  by \eqref{0617} we deduce that
 \begin{equation}  \label{1224}      
  c_1 \limsup_{k \to \infty} 
\Big[  \int_{B_{R \bar \mu_k}(\bar x_k)} h_k e^{u_k} \Big] ^{\frac{1}{n-1}}  
   \leq n \left( 1 + \frac{1}{C_1}  \right).
 \end{equation}
By applying Ascoli-Arzela, we can further assume, up to a subsequence, that 
\begin{equation} \label{0437}
\bar x_k \to p \in K, \quad h_k \to h  \geq a>0 \hbox{ in }C_{loc}(\Omega)
\end{equation}
as $k \to +\infty$.

\medskip \noindent Once \eqref{1224} is established, in order to reach a contradiction we aim to replace $\bar x_k$ by a nearby local maximum point $x_k \in \Omega$ of $u_k$ with $u_k(x_k)\geq u_k(\bar x_k)=\displaystyle \max_K u_k$. We can argue as follows: the function $\bar U_k (y) = u_k(\bar \mu_k y+\bar x_k)+n\log \bar \mu_k$ satisfies  
$$
   - \Delta_n \bar U_k = h_k (\bar \mu_k y+\bar x_k) e^{\bar U_k}
   \hbox{ in }  \Omega_k=\frac{\Omega-\bar x_k}{ \bar \mu_k}
$$
and
 \begin{equation} \label{1058bis}
  \bar U_k  \leq \bar U_k (0) = 0 \, \,  \hbox{ in } \frac{K - \bar x_k}{\bar \mu_k} ,
   \qquad
   \limsup_{k \to +\infty} \int_{B_R(0)} e^{\bar U_k}\leq  \frac{1}{a}   \big( \frac{n}{c_1} \big)^{n-1}  \left( 1 + \frac{1}{C_1}  \right)^{n-1}   
\end{equation}
in view of \eqref{1224}-\eqref{0437}. From \eqref{1058bis}, the Concentration-Compactness Principle and $\bar U_k(0)=0$ we deduce, up to a subsequence, that:
 
 \medskip
 
\begin{enumerate}
\item[{\bf (i)}]
either, $\bar U_k$ is bounded in $L^\infty_{loc}(\mathbb{R}^n)$
\item[{\bf (ii)}]
or, $h_k (\bar \mu_k y+\bar x_k)  e^{\bar U_k} \rightharpoonup \beta_0 \delta_0+\displaystyle \sum_{i=1}^I \beta_i \delta_{p_i}$ weakly in the sense of measures in $\mathbb{R}^n$, for some $\beta_i \geq n^n \omega_n$, $i \in \{0,\ldots, I\}$, and distinct points $p_1,\ldots,p_I \in \mathbb{R}^n \setminus \{0\}$, and $\bar U_k \to -\infty$ locally uniformly in $\mathbb{R}^n \setminus\{0, p_1,\ldots,p_I\}$.
\end{enumerate}

%%%%%%%%%%%%%%%%%%%%%%%%%%%%%%%%%
\medskip \noindent \underline{\bf{Case (i)}}: $\bar U_k$ is bounded in $L^\infty_{loc}(\mathbb{R}^n)$

\medskip \noindent 
By elliptic estimates \cite{Dib,Tol} we deduce that $ \bar U_k   \to \bar U $  in $C^1_{loc}(\mathbb{R}^n)$ as $k\to +\infty$, where $\bar U$ satisfies \eqref{Limitingn} with $h(p)>0$ and $\bar U(0)=0$ in view of \eqref{0437}-\eqref{1058bis}. Since in general $\frac{K-\bar x_k}{\bar \mu_k}$ does not tend to $\mathbb{R}^n$ as $k \to +\infty$, by \eqref{1058bis} we cannot guarantee that $\bar U$ achieves the maximum value at $0$. However, by the classification result in \cite{Esp} we have that $\bar U=U_{a,\lambda}$ for some $(a,\lambda)\in \mathbb{R}^n \times (0,\infty)$. Since $\bar U$ is a radially strictly decreasing function with respect to $a$, we can find a sequence $a_k \to a$ such that as $k \to +\infty$
\begin{equation}\label{804}
      \bar U_k(a_k)=\max_{B_R(a_k)}\bar U_k,
      \quad
      \bar U_k (a_k) \to \bar U (a) =\max_{\mathbb{R}^n}  \bar U
\end{equation}
for all $R>0$ and $k$ large (depending on $R$). Setting $x_k=\bar \mu_k a_k+\bar x_k$ and $\mu_k=e^{-\frac{u_k(x_k)}{n}}$, we have that 
$$
   u_k(x_k)=\bar U_k(a_k)-n\log \bar \mu_k \geq \bar U_k(0)-n \log \bar \mu_k=u_k(\bar x_k)$$ 
 and
\begin{equation} \label{750}
1  \leq \frac{\bar \mu_k}{\mu_k}=e^{\frac{u_k(x_k)-u_k(\bar x_k)}{n}}=e^{\frac{\bar U_k(a_k)}{n}} \xrightarrow{ \, k \to \infty \,}
e^{\frac{\max_{\mathbb{R}^n}  \bar U}{n}} 
\end{equation}
in view of \eqref{804}. Let us now rescale $u_k$ with respect to $x_k$ by setting 
$$U_k(y)=u_k(\mu_k y+x_k)+n\log \mu_k.
$$
Since  \eqref{1058bis}-\eqref{804} re-write in terms of $U_k$ as
\begin{eqnarray}\label{1020}
 && \limsup_{k \to +\infty} \int_{B_{R \frac{\bar \mu_k}{\mu_k}}(- \frac{\bar \mu_k}{\mu_k} a_k)} e^{U_k}
\leq  \frac{1}{a}   \big( \frac{n}{c_1} \big)^{n-1}  \left( 1 + \frac{1}{C_1}  \right)^{n-1} \\
\label{2146}
&& U_k(0)=\max_{B_{R \frac{\bar \mu_k}{\mu_k}}(0)}U_k=0
\end{eqnarray}
for all $R>0$, thanks to the uniform convergence \eqref{0437}, by \eqref{750}-\eqref{2146} and elliptic estimates \cite{Dib,Tol} we have that $U_k   \to U $ in $C^1_{loc}(\mathbb{R}^n)$ as $k\to +\infty$, where $U$ satisfies \eqref{limitpb} with $h(p)>0$. Then $U$ takes precisely the form \eqref{eq:Bubble} and satisfies \eqref{quantization}. 

\medskip \noindent Therefore for each $R >0$ and $\epsilon\in (0,1)$, there exists $k_0 = k_0 (R, \varepsilon) >0$ so that for all $k \geq k_0$ there hold $B_{R\mu_k}(x_k) \subset B_{\delta}(x_k) \subset B_{2\delta}(\bar x_k)$ and
\begin{equation} \label{1159bis}
     h_k(x) \geq  \sqrt{1-\epsilon} \ h(p),  \:\:      u_k(x) \geq  U_{x_k, \mu_k^{-1} } +  \log \sqrt{1-\epsilon}  \qquad \hbox{in }B_{R \mu_k}(x_k)
\end{equation}
in view of \eqref{0437} and $U_k \geq U +  \log \sqrt{1-\epsilon}$ in $B_{R}(0)$. Setting $f_k (t) =  (1-\epsilon)    h(p) e^{U_{ x_k, \mu_{k}^{-1}  } }  \chi_{B_{R \mu_k}(x_k)}$, by \eqref{1159bis} we have that $h_k e^{u_k}\geq f_k$ in $B_{\delta}(x_k)$ and then Lemma \ref{radial} implies the following lower bound for all $k \geq k_0$: 
\begin{eqnarray} 
u_k(x_k)-\inf_{B_{\delta}(x_k)}u_k \geq   \left(  \frac{1-\epsilon}{n \omega_n} \int_{B_{R} (0)} h(p) e^U   \right)^{\frac{1}{n-1}} \log \frac{ \delta}{R \mu_k}
  \label{1231}
\end{eqnarray}
in view of $\int_{B_{R\mu_k} (x_k)} f_k= (1-\epsilon)    \int_{B_R(0)} h(p) e^U$. Recalling that $\mu_k=e^{-\frac{u_k(x_k)}{n}}$, by \eqref{1231} we deduce that
$$
    \left(   \frac{1-\epsilon}{n \omega_n} \int_{B_{R} (0)} h(p) e^U   \right)^{\frac{1}{n-1}}  
    \leq 
    \frac{u_k(x_k)-\inf_{B_{\delta}(x_k)} u_k}{ \log \frac{\delta}{R} +  \frac{ u_k (x_k)}{n}  }.
$$
Since
$$u_k(x_k)+C_1 \inf_{B_{\delta}(x_k)} u_k \geq u_k(\bar x_k)+C_1 \inf_\Omega u_k=\max_K u_k+C_1 \inf_\Omega u_k\geq 0$$
in view of  \eqref{1253}, letting $k \to \infty$ we deduce
$$
   \left(  \frac{1-\epsilon}{n \omega_n} \int_{B_{R} (0)} h(p) e^U   \right)^{\frac{1}{n-1}}  
    \leq 
     n \limsup_{k \to \infty} \left\{ 1 -\frac{\inf_{B_{\delta}(x_k)} u_k}{u_k (x_k)} \right\}
     \leq 
     n \left( 1 + \frac{1}{C_1} \right) .    
$$
 Since this holds for each $R, \varepsilon >0$ we deduce that 
$$
      \frac{1}{n \omega_n} \int_{\mathbb{R}^n }  h(p) e^U  \leq \left[n ( 1 + \frac{1}{C_1}) \right]^{n-1}
     <  \left( \frac{n^2}{n-1} \right)^{n-1}
$$
in view of the assumption $C_1>n-1$. On the other hand, by \eqref{quantization} the left hand side is precisely $\left( \frac{n^2}{n-1} \right)^{n-1}$ and this is a contradiction.

\medskip \noindent 
\underline{\bf{Case (ii)}}: $h_k (\bar \mu_k y+\bar x_k)  e^{\bar U_k} \rightharpoonup \beta_0 \delta_0+\displaystyle \sum_{i=1}^I \beta_i \delta_{p_i}$ weakly in the sense of measures in $\mathbb{R}^n$, for some $\beta_i \geq n^n \omega_n$, $i \in \{0,\ldots, I\}$, and distinct points $p_1,\ldots,p_I \in \mathbb{R}^n \setminus \{0\}$, and $\bar U_k \to -\infty$ locally uniformly in $\mathbb{R}^n \setminus\{ 0, p_1,\ldots,p_I\}$

\medskip \noindent If $I \geq 1$, w.l.o.g. assume that $p_1,\dots,p_I \notin \overline{B_1(0)}$. Since $\bar U_k \to -\infty$ locally uniformly in $\overline{B_1(0)} \setminus \{0\}$ and $\displaystyle \max_{B_1(0)} \bar U_k \to +\infty$ as $k \to +\infty$, we can find $a_k \to 0$ so that
\begin{equation} \label{903}
\bar U_k(a_k)=\displaystyle \max_{B_1(0)} \bar U_k \to +\infty
\end{equation} 
as $k \to +\infty$. We now argue in a similar way as in case (i). Setting $x_k=\bar \mu_k a_k+\bar x_k$ and $\mu_k=e^{-\frac{u_k(x_k)}{n}}$, we have that $u_k(x_k)=\bar U_k(a_k)-n\log \bar \mu_k \geq \bar U_k(0)-n \log \bar \mu_k=u_k(\bar x_k)$ and
\begin{equation} \label{905}
\frac{\bar \mu_k}{\mu_k}=e^{\frac{\bar U_k(a_k)}{n}} \to +\infty
\end{equation}
as $k \to +\infty$ in view of \eqref{903}. Setting 
$$U_k(y)=u_k(\mu_k y+x_k)+n\log \mu_k,
$$
by \eqref{1058bis} and \eqref{903} we have that
\begin{eqnarray} \label{1020bis}
 && \limsup_{k \to +\infty} \int_{B_{R \frac{\bar \mu_k}{\mu_k}}(- \frac{\bar \mu_k}{\mu_k} a_k)} e^{U_k}
\leq  \frac{1}{a}   \big( \frac{n}{c_1} \big)^{n-1}  \left( 1 + \frac{1}{C_1}  \right)^{n-1} \\
\label{2146bis}
&& U_k(0)=\max_{B_{\frac{\bar \mu_k}{\mu_k}}(0)}U_k=0
\end{eqnarray}
for all $R>0$. Since $B_R(0) \subset B_{R \frac{\bar \mu_k}{\mu_k}}(- \frac{\bar \mu_k}{\mu_k} a_k) $ for all $k$ large in view of \eqref{905} and $\displaystyle \lim_{k\to +\infty}a_k=0$, by \eqref{905}-\eqref{2146bis} and elliptic estimates \cite{Dib,Tol} we have  that $U_k   \to U $  in $C^1_{loc}(\mathbb{R}^n)$ as $k\to +\infty$, where $U$ satisfies \eqref{limitpb}-\eqref{quantization}. We now proceed exactly as in case (i) to reach a contradiction. The proof is complete.
\end{proof}
\begin{oss} \label{commento} When $n=2$, the ``sup+inf" inequality was first derived by Shafrir~\cite{Shafrir} through an isoperimetric argument. It becomes clear in \cite{Rob}, when dealing with a fourth-order exponential PDE in $\mathbb{R}^4$, that the main point comes from the linear theory, which allows there to avoid the extra work needed in our framework. For instance, in the two dimensional case, 
inequality \eqref{0617bis}  is an easy consequence of the Green representation formula: 
given a solution $u$ to $-\Delta u=f$ in a domain containing $B_1(0)$, we can use the fundamental solution of the Laplacian to obtain
$$u(x)-\inf_{B_1(0)} u \geq -\frac{1}{2\pi}\int_{B_1(0)} \log \frac{|x-y|}{||x|y-\frac{x}{|x|}|} f(y) \qquad \forall \ x \in B_1(0),$$
which  through an integration by parts gives
$$ u(0)-\inf_{\Omega} u \geq -\frac{1}{2\pi}\int_{B_1(0)} \log |y| f(y) = \frac{1}{2\pi} \int_0^1 [\int_{B_t(0)} f] \frac{dt}{t}.$$
This linear argument also provides the optimal constant $c_1=\frac{1}{2\pi}$, which can be exploited to simplify the proof of Theorem \ref{711} as follows. The estimates~\eqref{1224} re-writes as
\begin{eqnarray} \label{1907}
\limsup_{k \to +\infty}  \int_{B_R(0)} h_k(\bar \mu_ky+\bar x_k) e^{\bar U_k} &=&\limsup_{k \to +\infty}  \int_{B_{R\bar \mu_k}(\bar x_k)} h_k e^{u_k} 
  \leq \left[\frac{n}{c_1}(1+\frac{1}{C_1})\right]^{n-1}\\
  & < & \Big[  \frac{n^2}{ c_1 (n-1)} \Big]^{n-1} \nonumber 
\end{eqnarray}
for all $R>0$ when $C_1 > n-1$. Since $c_1 = \frac{1}{2 \pi}$ and $[\frac{n^2}{ c_1 (n-1)}]^{n-1}=8\pi$ when $n =2$,  in case (i) of the above proof we deduce that $\displaystyle \int_{\mathbb{R}^2} h(p) e^{\bar U} <8\pi$, in contrast with the quantization property $\int_{\mathbb{R}^2} h(p) e^{U}=8\pi$ for every solution $U$ of \eqref{Limiting2}. Assuming w.l.o.g. $p_1,\dots,p_I \notin \overline{B_1(0)}$ if $I\geq 1$, in case (ii) of the above proof we deduce from \eqref{1907} with $R=1$ that $\beta_0 <8\pi$, in contrast with the lower estimate $\beta_0\geq 8\pi$ coming from  \eqref{eq:Peso1} and \eqref{1718} when $n=2$. Therefore, the proof of Theorem \ref{711} in dimension two becomes considerably simpler. 

\medskip
When $n \geq 3$ Green's representation formula is not available for $\Delta_n$ and \eqref{0617bis} does hold \cite{KiMa} with some constant $0<c_1 \leq (n \omega_n)^{-\frac{1}{n-1}}$. Since $c_1$ is in general strictly below the optimal one $(n \omega_n)^{-\frac{1}{n-1}}$, we need to fill the gap thanks to the exponential form of the nonlinearity through a blow-up approach. With this strategy a comparison argument with the radial case is exploited, since in the radial context inequality \eqref{0617bis} does hold with optimal constant $c_1=(n \omega_n)^{-\frac{1}{n-1}}$ thanks to Lemma \ref{radial}.
\end{oss}

As a consequence of the $\sup+\inf$ estimates in Theorem \ref{711}, we deduce the following useful decay estimate:
\begin{cor} \label{decay} Let $u_k$ be a sequence of solutions to \eqref{758}, satisfying \eqref{eq:Peso1} with $h_k \geq \epsilon_0 >0$ in  $B_{4r_0}(x_k) \subset \Omega$ and
\begin{equation} \label{1311}
|x-x_k|^n e^{u_k}  \leq C \qquad \hbox{in }B_{2 b_k}(x_k) \setminus B_{a_k}(x_k)
\end{equation}
for $0<2a_k< b_k \leq 2 r_0$. Then, there exist $\alpha,C>0$ such that
\begin{equation}\label{1315}
u_k \leq C-\frac{\alpha}{n} u_k(x_k)-(n+\alpha)\log |x-x_k|  
\end{equation}
for all $2a_k \leq |x-x_k| \leq b_k$. In particular, if $e^{-\frac{u_k(x_k)}{n}}=o(a_k)$ as $k \to +\infty$ we have that
\begin{equation} \label{1703}
\lim_{k \to +\infty} \int_{B_{b_k}(x_k) \setminus B_{2a_k}(x_k)} h_ke^{u_k} =0.
\end{equation}
\end{cor}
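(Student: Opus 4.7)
The decay \eqref{1315} sharpens the annular bound $|x-x_k|^n e^{u_k(x)} \leq C$ by the extra factor $(\lambda_k|x-x_k|)^{-\alpha}$, where $\lambda_k := e^{u_k(x_k)/n}$, and its true content concerns the region $\lambda_k|x-x_k|\to\infty$. My plan is to deduce \eqref{1315} from Theorem \ref{711intro} by a contradiction/rescaling argument, after which \eqref{1703} follows by direct integration.

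Suppose by contradiction that \eqref{1315} fails for some $\alpha\in(0,n/(n-1))$ to be fixed in terms of $C_1>n-1$: extract a subsequence and points $x_k^\ast$ in the annulus, with $r_k:=|x_k^\ast-x_k|$, such that $u_k(x_k^\ast)+(\alpha/n)u_k(x_k)+(n+\alpha)\log r_k\to+\infty$. Combined with the annular bound $u_k(x_k^\ast)+n\log r_k\leq\log C$, this forces $\alpha\log(\lambda_k r_k)\to+\infty$, so $|z_k|:=\lambda_k r_k\to\infty$. Introduce the bubble-scale rescaling
\[
\tilde W_k(z):=u_k(x_k+z/\lambda_k)-n\log\lambda_k,\qquad z_k:=\lambda_k(x_k^\ast-x_k),
\]
so that $\tilde W_k(0)=0$, $\tilde W_k$ solves $-\Delta_n \tilde W_k=h_k(x_k+z/\lambda_k)e^{\tilde W_k}$ with weight converging to $h(p)>0$ (here $p=\lim x_k$), and a direct computation turns the failure of \eqref{1315} into
\[
\tilde W_k(z_k)+(n+\alpha)\log|z_k|\to+\infty,
\]
while the annular bound transfers to $\tilde W_k(z)+n\log|z|\leq\log C$ on $\lambda_k a_k\leq|z|\leq 2\lambda_k b_k$.

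I would then apply the concentration-compactness principle to $\tilde W_k$ on each ball $B_R(0)$. Alternative (ii) is ruled out by $\tilde W_k(0)=0$, leaving either (i) $\tilde W_k\to\tilde W$ in $C^1_{loc}$ with $\tilde W$ a canonical bubble \eqref{eq:Bubble}, or (iii) concentration at finitely many blow-up points. In case (i), the sharp asymptotic $\tilde W(z)=-\tfrac{n^2}{n-1}\log|z|+O(1)$ at infinity, combined with one further rescaling of $\tilde W_k$ at scale $|z_k|$ around $z_k$ and an application of Theorem \ref{711intro} on the resulting bounded domain, forces $\tilde W_k(z_k)+\tfrac{n^2}{n-1}\log|z_k|$ to remain bounded; since $n+\alpha<n^2/(n-1)$, this contradicts the divergence of $\tilde W_k(z_k)+(n+\alpha)\log|z_k|$. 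Case (iii) is reduced to case (i) by rescaling around the blow-up point of $\tilde W_k$ that governs the sequence $z_k$, in the spirit of the reduction (ii)$\to$(i) in the proof of Theorem \ref{711}. The main obstacle is the careful bookkeeping required to identify this governing bubble in the multi-bubble subcase, where several blow-up points of $\tilde W_k$ may coexist without the quantization \eqref{10111} at our disposal.

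Once \eqref{1315} is in hand, the integral bound \eqref{1703} is immediate: assuming $e^{-u_k(x_k)/n}=o(a_k)$,
\[
\int_{B_{b_k}(x_k)\setminus B_{2a_k}(x_k)} h_k e^{u_k}\leq bC e^{-(\alpha/n)u_k(x_k)}\int_{2a_k}^{b_k}\frac{n\omega_n\,ds}{s^{1+\alpha}}\leq\frac{bCn\omega_n}{\alpha\,2^\alpha}(a_k\lambda_k)^{-\alpha}\xrightarrow[k\to\infty]{}0.
\]
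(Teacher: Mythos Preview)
Your detour through the bubble scale $\lambda_k^{-1}$ and the concentration--compactness alternative is unnecessary and obscures the one-line mechanism. The paper argues directly: for each $r\in[2a_k,b_k]$ set $V_k(y)=u_k(ry+x_k)+n\log r$, apply Theorem~\ref{711intro} on $B_1(0)$ with $K=\{0\}$ to get $V_k(0)+C_1\inf_{|y|=1}V_k\le C_2$ (the minimum of the $n$-superharmonic $V_k$ on $\overline{B_1(0)}$ is attained on $\partial B_1$), and then invoke the ordinary Harnack inequality on the annulus $\{\tfrac12\le|y|\le2\}$, where \eqref{1311} keeps $V_k$ bounded above, to obtain $C_0\sup_{|y|=1}V_k\le\inf_{|y|=1}V_k+C$. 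Combining the two yields $\sup_{|y|=1}V_k\le C-\tfrac{\alpha}{n}V_k(0)$ with $\alpha=n/(C_0C_1)$, which is exactly \eqref{1315}. No contradiction, no case split, no bubble profile.

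Your sketch has a real gap in case~(i): the asymptotic $\tilde W(z)=-\tfrac{n^2}{n-1}\log|z|+O(1)$ is a statement about the \emph{limit} on fixed compacta, and since $|z_k|\to\infty$ escapes every compactum, the $C^1_{loc}$ convergence $\tilde W_k\to\tilde W$ says nothing about $\tilde W_k(z_k)$. Your proposed ``further rescaling at scale $|z_k|$ around $z_k$'' together with Theorem~\ref{711intro} alone cannot produce the sharp coefficient $\tfrac{n^2}{n-1}$: Theorem~\ref{711intro} only bounds $\max_K+C_1\inf_\Omega$, so to control a value on the unit sphere you still need a Harnack step on an annulus, and the exponent that emerges is $n+\tfrac{n}{C_0C_1}$, the paper's $\alpha$, not $\tfrac{n^2}{n-1}$. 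In fact, had you rescaled at scale $r_k=|x_k^\ast-x_k|$ around $x_k$ (rather than at $\lambda_k^{-1}$), this very sup$+$inf plus Harnack combination already gives the contradiction in one step, irrespective of whether $\tilde W_k$ carries one bubble or several; the case~(iii) bookkeeping you flag as the ``main obstacle'' never arises. Your integration for \eqref{1703} is correct.
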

\begin{proof} Letting $V_k(y)=u_k(ry+x_k)+n\log r$ for any $0<r\leq b_k$, we have that  $-\Delta_n V_k=h_k(ry+x_k) e^{V_k}$ does hold in $\Omega_k=\frac{\Omega-x_k}{r}$ and \eqref{1311} implies that 
\begin{equation} \label{1311 1}
 \sup_{B_2(0) \setminus B_{\frac{1}{2}}(0)} |y|^n e^{V_k}\leq C <+\infty
\end{equation}
for all $2a_k \leq r\leq b_k$. Since $V_k$ is uniformly bounded from above in $B_2(0) \setminus B_{\frac{1}{2}}(0)$ in view of \eqref{1311 1}, by the Harnack inequality \cite{Ser1,Tru} it follows that there exist $C>0$ and $C_0 \in (0,1]$ so that
\begin{equation} \label{1539}
C_0 \sup_{|y|=1}V_k\leq \inf_{|y|=1} V_k+C
\end{equation}
for all $2a_k\leq r \leq b_k$. 

\medskip \noindent Up to a subsequence, assume that $\displaystyle \lim_{k \to +\infty} x_k =x_0$. By assumption we have that $h_k(ry+x_k) \to h(ry+x_0)\geq \epsilon_0>0$ in $C_{loc}(B_1(0))$ as $k \to +\infty$  for all $0<r\le 2 r_0$. For any given $C_1>n-1$,  by Theorem \ref{711intro} applied to $V_k$ in $B_1(0)$ with $K=\{0\}$ we obtain the existence of $C_2>0$ so that
\begin{equation} \label{1546}
V_k(0)+C_1 \inf_{B_1(0)} V_k= V_k(0)+C_1 \inf_{|y|=1} V_k \leq C_2
\end{equation}
does hold for all $k$ and all $0<r\leq 2 r_0$. Inserting \eqref{1546} into \eqref{1539} we deduce that
$$\sup_{|y|=1}V_k  \leq C-\frac{\alpha}{n} V_k(0)$$
for all $2a_k\leq r \leq b_k$, with $\alpha=\frac{n}{C_0 C_1}>0$ and some $C>0$, which re-writes in terms of $u_k$ as \eqref{1315}. In particular, by \eqref{1315} we deduce that
$$0\leq \int_{2a_k\leq |x-x_k|\leq b_k} h_ke^{u_k} \leq C e^{-\frac{\alpha}{n}u_k(x_k)}\int_{2a_k\leq |x-x_k|\leq b_k} \frac{dx}{|x-x_k|^{n+\alpha}}=\frac{C n \omega_n}{\alpha 2^{\alpha}} [a_k e^{\frac{u_k(x_k)}{n}} ]^{-\alpha}\to 0$$
provided $e^{-\frac{u_k(x_k)}{n}}=o(a_k)$ as $k \to +\infty$, in view of \eqref{eq:Peso1} and $B_{4r_0}(x_k) \subset \Omega$.
\end{proof}

%%%%%%%%%%%%%%%%%%%%%%%%%%%%
\section{The case of isolated blow-up} \label{sec:Simple}
\noindent 
%%%%%%%%%%%%%%%%%%%%%%%%%%%%

\noindent The following basic description of the blow-up mechanism is very well known, see \cite{LiSh} in the two-dimensional case and for example \cite{DHR} in a related higher-dimensional context, and is the starting point for performing a more refined asymptotic analysis. For reader's convenience its proof is reported in the appendix.
\begin{thm} \label{352}
Let $u_k$ be a sequence of solutions to \eqref{758} which satisfies \eqref{758bis} and
\begin{equation}\label{824}
h_k e^{u_k} \rightharpoonup \beta \delta_0 \hbox{ weakly in the sense of measures in }B_{3\delta}(0) \subset \Omega \end{equation}
for some $\beta>0$ as  $k \to \infty$. Assuming \eqref{eq:Peso1}, then $h(0)>0$ and, up to a subsequence, we can find a finite number of points $x_k^1,\dots,x_k^N$ so that for all $i \not= j$
\begin{eqnarray}
&& |x_k^i| +\mu_k^i+ \frac{\mu_k^i+\mu_k^j}{|x_k^i-x_k^j|}\to 0  \label{339}\\
&&  u_k(\mu_k^i y+x_k^i)+n \log \mu_k^i \to U(y) \hbox{ in }C^1_{loc}(\mathbb{R}^n) \label{340}
\end{eqnarray}
as $k \to +\infty$ and
\begin{eqnarray}
\min\{ |x-x_k^1|^n,\dots,|x-x_k^N|^n \}  e^{u_k} \leq C \hbox{ in }B_{2 \delta}(0) \label{341}
\end{eqnarray}
for all $k$ and some $C>0$, where $U$ is given by \eqref{eq:Bubble} with $p=0$ and
\begin{equation} \label{828}
u_k(x_k^i)=\max_{B_{\mu_k^i}(x_k^i)} u_k, \quad \mu_k^i=e^{-\frac{u_k(x_k^i)}{n}}.
\end{equation}
\end{thm}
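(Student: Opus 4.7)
The plan is to follow the classical inductive bubble-picking procedure, carefully adapted to the quasilinear setting by invoking the classification result of \cite{Esp} at each rescaling.

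First I would establish that $h(0)>0$ and that the origin is the only blow-up point of $u_k$ in $B_{3\delta}(0)$. Since $\int_{B_{3\delta}(0)} h_k e^{u_k} \to \beta>0$, the Concentration-Compactness Principle forces alternative (iii) on $B_{3\delta}(0)$, the whole mass concentrates at $0$, and $u_k \to -\infty$ locally uniformly in $B_{3\delta}(0)\setminus\{0\}$. Were $h(0)=0$, picking $y_k \in \overline{B_{2\delta}(0)}$ with $u_k(y_k)=\max_{\overline{B_{2\delta}(0)}} u_k \to +\infty$ would give $y_k \to 0$. Rescaling by $\mu_k=e^{-u_k(y_k)/n}$ produces $\overline U_k$ with $\overline U_k \leq \overline U_k(0)=0$ and $-\Delta_n \overline U_k=h_k(\mu_k y+y_k) e^{\overline U_k}$, where the right-hand side tends to zero locally uniformly; elliptic regularity combined with the strong maximum principle yields $\overline U_k \to 0$ in $C^1_{loc}(\mathbb{R}^n)$. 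Then $\int_{B_R(0)} e^{\overline U_k} = \int_{B_{R\mu_k}(y_k)} e^{u_k} \to \omega_n R^n$, contradicting $\sup_k \int_\Omega e^{u_k}<+\infty$ once $R$ is large.

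Next I would build the $x_k^i$ inductively. Let $x_k^1$ be a maximum point of $u_k$ on $\overline{B_{2\delta}(0)}$; rescaling by $\mu_k^1=e^{-u_k(x_k^1)/n}$ and invoking \cite{Esp} gives \eqref{340} for $i=1$. Having selected $x_k^1,\dots,x_k^j$, consider $\phi_j^k(x):=\min_{1 \leq i \leq j}|x-x_k^i|^n\, e^{u_k(x)}$ on $\overline{B_{2\delta}(0)}$. If $\sup \phi_j^k$ stays bounded, the process stops and \eqref{341} is proved with $N=j$. Otherwise let $x_k^{j+1}$ maximize $\phi_j^k$, set $\mu_k^{j+1}=e^{-u_k(x_k^{j+1})/n}$ and $d_k^{j+1}:=\min_{i \leq j}|x_k^{j+1}-x_k^i|$; by construction $d_k^{j+1}/\mu_k^{j+1}\to\infty$. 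On $|y|\leq R$, the maximality of $x_k^{j+1}$ together with $\min_i |\mu_k^{j+1}y+x_k^{j+1}-x_k^i|=d_k^{j+1}(1+o(1))$ yields an upper bound $U_k^{j+1}(y) \leq o(1)$, while $U_k^{j+1}(0)=0$. The Concentration-Compactness Principle then rules out the degenerate alternatives, and a further application of the classification in \cite{Esp} delivers the convergence \eqref{340} for $i=j+1$.

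The main obstacle is the separation property \eqref{339}, specifically $\mu_k^i/|x_k^i-x_k^{j+1}| \to 0$ for $i \leq j$ (the symmetric bound $\mu_k^{j+1}/|x_k^i-x_k^{j+1}| \leq \mu_k^{j+1}/d_k^{j+1} \to 0$ is built into the selection). Arguing by contradiction along a subsequence, if $y_k:=(x_k^{j+1}-x_k^i)/\mu_k^i$ stayed bounded, the $C^1_{loc}$ convergence $U_k^i \to U$ would give $u_k(x_k^{j+1})-u_k(x_k^i)=U_k^i(y_k)=O(1)$, hence $\mu_k^{j+1} \geq c\,\mu_k^i$ for some $c>0$. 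Combined with $d_k^{j+1} \leq |x_k^{j+1}-x_k^i| \leq C\mu_k^i$ this would force $d_k^{j+1}/\mu_k^{j+1} \leq C'$, contradicting the construction. Termination after finitely many steps follows because \eqref{339} makes the balls $B_{R\mu_k^i}(x_k^i)$ pairwise disjoint for $k$ large, and each carries mass converging to $c_n\omega_n$ by \eqref{quantization}, so $N \leq \beta/(c_n\omega_n)$. Lastly, to secure the local maximality $u_k(x_k^i)=\max_{B_{\mu_k^i}(x_k^i)} u_k$ demanded by \eqref{828}, I would replace each $x_k^i$ with an actual maximum point of $u_k$ in $B_{\mu_k^i}(x_k^i)$; the strict maximum of the limit profile $U$ at the origin ensures this negligible shift preserves \eqref{339}--\eqref{341}.
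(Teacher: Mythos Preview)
Your proof is correct and follows essentially the same inductive bubble-picking argument as the paper's Appendix: choose $x_k^{j+1}$ to maximize $u_k+n\log\min_{i\leq j}|x-x_k^i|$, rescale, use the resulting upper bound together with the classification in \cite{Esp} to obtain \eqref{340}, deduce \eqref{339} from the already-established profiles, and terminate via the mass lower bound $Nc_n\omega_n\leq\beta$. The only structural difference is that the paper secures \eqref{828} \emph{during} each inductive step---after obtaining $\bar U_k\to U_{a,\lambda}$ it shifts from the weighted maximizer $\bar x_k$ to $x_k^{l+1}=\bar\mu_k a_k+\bar x_k$ where $a_k\to a$ is a genuine local maximum of $\bar U_k$ (using that $U_{a,\lambda}$ is radially strictly decreasing about $a$)---whereas you perform the analogous adjustment once at the end; both are valid.
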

In this section we consider the case of an ``isolated" blow-up point corresponding to have $N=1$ in Theorem \ref{352}, namely
\begin{equation} \label{simple}
|x-x_k|^n e^{u_k} \leq C \hbox{ in }B_{2 \delta }(0) 
\end{equation}
for all $k$ and some $C>0$, where $x_k$ simply denotes $x_k^1$. The following result, corresponding to Theorem \ref{thm:Main} for the case of an isolated blow-up, extends the analogous two-dimensional one \cite[Prop. $2$]{LiSh} to $n \geq 2$. 
\begin{thm} \label{700}
Let $u_k$ be a sequence of solutions to \eqref{758}  which satisfies \eqref{758bis}, \eqref{eq:Peso1}, \eqref{824} and \eqref{simple}. Then 
$$\beta=c_n \omega_n.$$
\end{thm}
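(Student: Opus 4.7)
The plan is to compute $\beta$ by splitting the integral $\int_{B_\delta(x_k)} h_k e^{u_k}$ into a ``bubble region'' $B_{R\mu_k}(x_k)$ and a ``neck region'' $B_\delta(x_k)\setminus B_{R\mu_k}(x_k)$, and then sending $k\to\infty$ followed by $R\to\infty$. The first contribution should recover $c_n\omega_n$ directly from the limiting bubble $U$, while the second must vanish, and this vanishing is precisely what the decay estimate of Corollary \ref{decay}---ultimately a consequence of the $\sup+\inf$ inequality of Section \ref{sec:SupInf}---is tailored to produce.

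For the bubble region, since $x_k\to 0$ by \eqref{339} and $h_k e^{u_k}\rightharpoonup\beta\delta_0$, one has $\int_{B_\delta(x_k)} h_k e^{u_k}\to\beta$. I would then rescale via $y=(x-x_k)/\mu_k$, so that by the $C^1_{\mathrm{loc}}$-convergence \eqref{340} of $U_k(y):=u_k(\mu_ky+x_k)+n\log\mu_k$ to $U$, together with $h_k(\mu_ky+x_k)\to h(0)$ from \eqref{eq:Peso1} and $h(0)>0$ granted by Theorem \ref{352},
$$\lim_{k\to\infty}\int_{B_{R\mu_k}(x_k)} h_k e^{u_k}\,dx=\int_{B_R(0)} h(0)\,e^{U}\,dy$$
for every fixed $R>0$. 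By the explicit formula \eqref{eq:Bubble} and the mass identity \eqref{quantization}, the right-hand side tends to $c_n\omega_n$ as $R\to\infty$.

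For the neck region, I would apply Corollary \ref{decay} with $b_k=\delta$ and $a_k=R\mu_k/2$ for fixed $R>2$. The isolated-blow-up hypothesis \eqref{simple} gives $|x-x_k|^n e^{u_k}\leq C$ in the required annulus $B_{2\delta}(x_k)\setminus B_{R\mu_k/2}(x_k)$ (for $k$ large, since $x_k\to 0$), and $h_k\geq\epsilon_0>0$ near $x_k$ because $h(0)>0$. The decay estimate \eqref{1315} then yields, for $R\mu_k\leq|x-x_k|\leq\delta$,
$$u_k(x)\leq C-\frac{\alpha}{n}\,u_k(x_k)-(n+\alpha)\log|x-x_k|,$$
with an exponent $\alpha>0$ \emph{independent of} $R$. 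Integrating produces
$$\int_{R\mu_k\leq|x-x_k|\leq\delta} h_k e^{u_k}\,dx\leq C\,\mu_k^{\alpha}(R\mu_k)^{-\alpha}=\frac{C}{R^{\alpha}},$$
a bound uniform in $k$ that vanishes as $R\to\infty$.

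Combining the two estimates and sending first $k\to\infty$ and then $R\to\infty$ forces $\beta=c_n\omega_n$. The main conceptual difficulty has already been absorbed into Section \ref{sec:SupInf}: it is precisely the $R$-independence of $\alpha$---obtained by applying Theorem \ref{711intro} on balls of scale comparable to the fixed domain rather than to the bubble scale $\mu_k$---that prevents a stubborn logarithmic neck contribution. Without the $\sup+\inf$ inequality one could only invoke the Wolff-potential bound with a non-sharp constant, which would leave a residual gap and permit $\beta$ to strictly exceed $c_n\omega_n$ even in the isolated case.
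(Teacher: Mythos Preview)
Your proposal is correct and mirrors the paper's argument: the bubble region is handled via \eqref{340} and \eqref{quantization}, and the neck region via Corollary~\ref{decay}. The only cosmetic differences are that the paper diagonalizes with a sequence $R_k\to\infty$ and invokes conclusion \eqref{1703} directly rather than integrating \eqref{1315} by hand, and that it takes $b_k=r_0\leq\delta/2$ so that the annulus required in \eqref{1311} stays inside $B_{2\delta}(0)$ (your choice $b_k=\delta$ overshoots slightly since $x_k\neq 0$, but this is trivially repaired).
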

\begin{proof} First, notice that $x_k \to 0$ as $k \to +\infty$ and $h(0)>0$ in view of Theorem \ref{352}. Since $h \in C(\Omega)$ take $0<r_0 \leq \frac{\delta}{2}$ and $\epsilon_0>0$ so that $h \geq 2 \epsilon_0$ for all $y \in B_{5r_0}(0)$. By \eqref{eq:Peso1} we then deduce that $h_k \geq \epsilon_0>0$ in $B_{4r_0}(x_k)\subset \Omega$. Letting $\mu_k=e^{-\frac{u_k(x_k)}{n}}$ and $U_k=u_k(\mu_ky+x_k)+n \log \mu_k$, there holds
$$ \lim_{k \to +\infty} \int_{B_{R\mu_k}(x_k)} h_k e^{u_k} dx=\lim_{k \to +\infty}  \int_{B_R(0)}h_k(\mu_ky+x_k) e^{U_k} dy = \int_{B_R(0)} h(0)e^U dy$$
in view of \eqref{eq:Peso1}  and \eqref{340}. Therefore we can construct $R_k \to +\infty$ so that $R_k \mu_k \leq r_0$ and
\begin{equation} \label{1749}
\lim_{k \to +\infty} \int_{B_{R_k \mu_k}(x_k)} h_k e^{u_k}dx=c_n \omega_n
\end{equation}
in view of \eqref{quantization} with $p=0$. Since \eqref{simple} implies the validity of \eqref{1311} with $b_k=r_0$ and $a_k=\frac{R_k \mu_k}{2}$, we can apply Corollary \ref{decay} to deduce by \eqref{1703} that
\begin{equation} \label{1752}
\lim_{k \to +\infty} \int_{B_{r_0}(x_k) \setminus B_{R_k \mu_k}(x_k)} h_ke^{u_k} =0
\end{equation}
in view of $\mu_k=e^{-\frac{u_k(x_k)}{n}}=o(a_k)$ as $k \to +\infty$. Since by the Concentration-Compactness Principle we have that $u_k \to -\infty$ locally uniformly in $B_{3\delta}(0) \setminus \{0\}$ as $k \to +\infty$, we finally deduce that $\beta$ in \eqref{824} satisfies
$$\beta=\lim_{k \to +\infty} \int_{B_{r_0}(x_k)} h_k e^{u_k}=c_n \omega_n$$
in view of \eqref{1749}-\eqref{1752}, and the proof is complete.
\end{proof}

%%%%%%%%%%%%%%%%%%%%%%%%%%%%%%%%%%%%
\section{General quantization result} \label{sec:Quantization}
%%%%%%%%%%%%%%%%%%%%%%%%%%%%%%%%%%%%

\noindent In order to address quantization issues in the general case where $N\geq 2$ in Theorem \ref{352}, in the following result let us consider a more general situation.
\begin{thm} \label{3522}
Let $u_k$ be a sequence of solutions to \eqref{758} which satisfies \eqref{758bis} and \eqref{824}. Assume \eqref{eq:Peso1} and the existence of a finite number of points $x_k^1,\dots,x_k^N$  and radii $r_k^1,\dots,r_k^N$ so that  
for all $i \not= j$
\begin{eqnarray} \label{858}
|x_k^i| +\frac{\mu_k^i}{r_k^i} +\frac{r_k^i+r_k^j}{|x_k^i-x_k^j|}\to 0
\end{eqnarray} 
as $k\to +\infty$,  where $\mu_k^i=e^{-\frac{u_k(x_k^i)}{n}}$, and
\begin{eqnarray} 
\min\{ |x-x_k^1|^n,\dots,|x-x_k^N|^n \}  e^{u_k}  \leq C \hbox{ in }B_{2\delta}(0) \setminus  \bigcup_{i=1}^N B_{r_k^i}(x_k^i) \label{811}
\end{eqnarray}
for all $k$ and some $C>0$. If $\displaystyle \lim_{k \to +\infty} \int_{B_{2 r_k^i}(x_k^i)} h_k e^{u_k}=\beta_i$ for all $i=1,\dots,N$, then
\begin{equation} \label{tquant}
\lim_{k \to +\infty} \int_{B_{\frac{\delta}{2}}(0)} h_k e^{u_k}=\sum_{i=1}^N \beta_i.
\end{equation}
\end{thm}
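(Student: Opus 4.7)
The conclusion \eqref{tquant} is equivalent to showing that the ``neck'' integral
$$
\lim_{k \to +\infty} \int_{B_{\delta/2}(0) \setminus \bigcup_{i=1}^N B_{2r_k^i}(x_k^i)} h_k e^{u_k} \;=\; 0
$$
vanishes, since \eqref{858} forces the balls $B_{2r_k^i}(x_k^i)$ to be pairwise disjoint for $k$ large and each carries mass $\to \beta_i$ by hypothesis. Fix a small $\rho > 0$. Applying the Concentration-Compactness Principle in $B_{3\delta}(0)$, the weak convergence \eqref{824} forces $0$ to be the only blow-up point, so $u_k \to -\infty$ locally uniformly on $B_{3\delta}(0)\setminus\{0\}$; hence the integral over $B_{\delta/2}(0)\setminus B_\rho(0)$ vanishes, and the task reduces to killing the neck inside $B_\rho(0)$.

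\textbf{Step 1: single-bubble annuli.} For each $i$, let $\sigma_k^i := \tfrac{1}{2}\min_{j\ne i}|x_k^i - x_k^j|\to 0$; the ball $B_{\sigma_k^i}(x_k^i)$ contains no other bubble, so on $B_{\sigma_k^i}(x_k^i)\setminus B_{r_k^i}(x_k^i)$ the point $x_k^i$ is the nearest bubble to every $x$ and \eqref{811} gives $|x-x_k^i|^n e^{u_k}\le C$. Since $\mu_k^i = o(r_k^i)$ by \eqref{858}, Corollary \ref{decay} applied with center $x_k^i$, $a_k = r_k^i$, $b_k = \sigma_k^i/2$ yields
$$
\int_{B_{\sigma_k^i/2}(x_k^i)\setminus B_{2r_k^i}(x_k^i)} h_k e^{u_k} \longrightarrow 0, \qquad \int_{B_{\sigma_k^i/2}(x_k^i)} h_k e^{u_k} \longrightarrow \beta_i.
$$

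\textbf{Step 2: hierarchical bridging.} To close the gap between the scales $\sigma_k^i$ and the fixed $\rho$, pass to a subsequence such that every ratio $|x_k^i-x_k^j|/|x_k^{i'}-x_k^{j'}|$ has a limit in $[0,\infty]$. This organises the bubbles into nested clusters at a finite hierarchy of separated scales $s_k^{(1)}\ll s_k^{(2)}\ll\dots\ll s_k^{(L)} = o(1)$ with $s_k^{(m+1)}/s_k^{(m)}\to\infty$. For a cluster $C$ alive at scale $s\in(s_k^{(m)},s_k^{(m+1)})$, pick a representative $y_k^C = x_k^{i_C}$ and let $D_k^C = \mathrm{diam}\,C\le (N-1)s_k^{(m)}$; every bubble outside $C$ satisfies $|x_k^{j'}-y_k^C|\ge s_k^{(m+1)}$. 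On the annulus $B_{s_k^{(m+1)}/2}(y_k^C)\setminus B_{4D_k^C}(y_k^C)$, the nearest bubble $x_k^{j^\ast(x)}$ lies in $C$ and the triangle inequality gives $|x-y_k^C|\le \tfrac{4}{3}|x-x_k^{j^\ast(x)}|$, so \eqref{811} delivers $|x-y_k^C|^n e^{u_k}\le C'$. Since some $j\in C$ satisfies $|x_k^{i_C}-x_k^j|\ge D_k^C/2$, \eqref{858} implies $\mu_k^{i_C} = o(D_k^C)$ (automatic for singletons via Step~1). Corollary \ref{decay} with $a_k = \max\{4D_k^C,\,r_k^{i_C}\}$ and $b_k = s_k^{(m+1)}/4$ gives vanishing mass on this annulus, and an induction on $m$ starting from Step~1 yields $\int_{B_{s_k^{(m+1)}/2}(y_k^C)} h_k e^{u_k}\to \sum_{i\in C}\beta_i$. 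After $L$ such steps all bubbles belong to a single super-cluster whose enclosing ball covers a macroscopic neighborhood of $0$ inside $B_\rho(0)$, yielding \eqref{tquant}.

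\textbf{Main obstacle.} The decisive technical point is securing the pointwise bound $|x-y_k^C|^n e^{u_k}\le C$ on a \emph{full} annulus around the cluster representative, not merely on the Voronoi cell of $y_k^C$, as required by hypothesis \eqref{1311} of Corollary \ref{decay}. This is achieved by choosing the outer radius strictly below $s_k^{(m+1)}$ and by exploiting the subsequence-imposed separation $s_k^{(m+1)}/s_k^{(m)}\to\infty$; without it the annulus $B_{s_k^{(m+1)}/2}(y_k^C)\setminus B_{4D_k^C}(y_k^C)$ could be too thin and the two-sided comparison of $|x-y_k^C|$ with $|x-x_k^{j^\ast(x)}|$ would collapse. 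Once this geometric separation is in place the iterative assembly is pure bookkeeping.
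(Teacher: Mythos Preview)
Your overall strategy---kill the neck mass by iterated use of Corollary~\ref{decay} across a hierarchy of scales---is the same as the paper's, which is phrased as strong induction on $N$ rather than an explicit hierarchy. Step~1 is correct and matches the paper exactly.

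There is, however, a genuine gap in Step~2. At each inductive level your argument supplies two ingredients: (a) from the previous level, the mass $\sum_{i\in C'}\beta_i$ sits in a ball of radius $\sim s_k^{(m+1)}/2$ around each sub-cluster $C'$ of $C$; and (b) Corollary~\ref{decay} kills the mass on the annulus $B_{s_k^{(m+2)}/4}(y_k^C)\setminus B_{8D_k^C}(y_k^C)$. But these two regions do not meet. The sub-clusters of $C$ are mutually separated by distances $\sim s_k^{(m+1)}$, so the balls in (a) can have radius at most $\sim s_k^{(m+1)}/2$ before they collide; meanwhile $D_k^C$, being the diameter of a set of at least two points at pairwise distance $\sim s_k^{(m+1)}$, is itself $\gtrsim s_k^{(m+1)}$, making $8D_k^C\gtrsim 8\,s_k^{(m+1)}$. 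The ``Swiss-cheese'' region
\[
B_{8D_k^C}(y_k^C)\ \setminus\ \bigcup_{C'\subset C} B_{s_k^{(m+1)}/2}(y_k^{C'})
\]
is therefore nonempty, lives entirely at the fixed scale $s_k^{(m+1)}$, and is \emph{not} an annulus centred at any bubble-free point---so hypothesis \eqref{1311} of Corollary~\ref{decay} is unavailable there. Your ``Main obstacle'' paragraph secures the outer annulus but never touches this inner piece; the claim that ``the iterative assembly is pure bookkeeping'' is exactly where the argument breaks. (Concretely: with three bubbles at mutual distances $d_k,d_k,D_k$ and $d_k\ll D_k$, Step~1 gives masses in $B_{d_k/4}(x_k^1)$ and $B_{d_k/4}(x_k^2)$, your annulus starts only at $8d_k$, and the region $B_{8d_k}(x_k^1)\setminus\big(B_{d_k/4}(x_k^1)\cup B_{d_k/4}(x_k^2)\big)$ is left unaccounted for.)

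The paper closes precisely this gap, and not via Corollary~\ref{decay}. It rescales $\tilde u_k(y)=u_k(d_k y+x_k^1)+n\log d_k$ (with $d_k$ the minimal pairwise distance, playing the role of your $s_k^{(m+1)}$), so that the relevant bubble centres converge to finitely many points in a fixed ball; assumption \eqref{811} becomes $\min_i|y-\tfrac{x_k^i-x_k^1}{d_k}|^n e^{\tilde u_k}\le C_R$ there, and since $\tilde u_k$ blows up at each limit point, the Concentration-Compactness Principle forces $\tilde u_k\to-\infty$ uniformly on the complement of small balls around those points (equations \eqref{1842}--\eqref{1041}). That rescaling-plus-CCP step is the missing ingredient; once inserted at each level of your hierarchy, your proof becomes complete and equivalent to the paper's.
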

\begin{proof}
First of all, by applying the Concentration-Compactness Principle to $u_k(r_k^i y+x_k^i)+n \log r_k^i$ we obtain that $\beta_i>0$, $i=1,\dots,N$, in view of $\frac{\mu_k^i}{r_k^i}\to 0$ as $k \to +\infty$. Since $h(0)>0$ by Theorem \ref{352} and $h \in C(\Omega)$, we can find $0<r_0 \leq \frac{\delta}{2}$ so that $h_k \geq \epsilon_0>0$ in $B_{4r_0}(x_k)\subset \Omega$ in view of \eqref{eq:Peso1}. The case $N=1$ follows the same lines as in Theorem \ref{700}: since \eqref{858}-\eqref{811} imply the validity of \eqref{1311} with $b_k=r_0$ and $a_k=r_k$,  by Corollary \ref{decay} we get that
$$\lim_{k \to +\infty} \int_{B_{r_0}(x_k) \setminus B_{2r_k}(x_k)} h_ke^{u_k} =0$$
in view of $\mu_k=o(r_k)$. Since $u_k \to -\infty$ locally uniformly in $B_{3\delta}(0) \setminus \{0\}$ as $k \to +\infty$ in view of the Concentration-Compactness Principle, \eqref{tquant} is then  established when $N=1$.

\medskip \noindent We proceed by strong induction in $N$ and assume the validity of Theorem \ref{3522} for a number of points $\leq N-1$. Given  $x_k^1,\dots,x_k^N$, define their minimal distance as $d_k=\min\{ |x_k^i-x_k^j|: \: i,j=1,\dots,N,\: i \not=j \}$. Since $B_{\frac{d_k}{2}}(x_k^i) \cap B_{\frac{d_k}{2}}(x_k^j)=\emptyset$ for $i \not= j$, we deduce that $|x-x_k^i|\leq |x-x_k^j|$ in $B_{\frac{d_k}{2}}(x_k^i)$ for all $i \not= j$ and then \eqref{811} gets rewritten as $ |x-x_k^i|^n e^{u_k}  \leq C$ in $B_{\frac{d_k}{2}}(x_k^i) \setminus B_{r_k^i}(x_k^i)$ for all $i=1,\dots,N$. By \eqref{858} and Corollary \ref{decay} with $b_k=\frac{d_k}{4}$ and $a_k=r_k^i$ we deduce that $\displaystyle \int_{B_{\frac{d_k}{4}}(x_k^i) \setminus B_{2 r_k^i}(x_k^i)} h_k e^{u_k}  \to 0$ as $k \to +\infty$ and then 
\begin{equation} \label{1038}
\lim_{k \to +\infty} \int_{B_{\frac{d_k}{4}}(x_k^i)} h_k e^{u_k}=\beta_i \qquad \forall \: i=1,\dots,N.
\end{equation}
Up to relabelling, assume that $d_k=|x_k^1-x_k^2|$ and consider the following set of indices 
$$I=\{i=1,\dots,N: \: |x_k^i-x_k^1| \leq C d_k \hbox{ for some }C>0\}$$
of cardinality $N_0 \in [2,N]$ since $1,2 \in I$ by construction. Up to a subsequence, we can assume that
\begin{equation} \label{1314}
\frac{|x_k^j-x_k^i|}{d_k} \to +\infty \hbox{ as }k\to +\infty
\end{equation}
for all $i \in I$ and $j \notin I$. Letting $\tilde u_k(y)=u_k(d_ky+x_k^1)+n \log d_k$, notice that 
\begin{equation} \label{1845}
\tilde u_k (\frac{x_k^i-x_k^1}{d_k})=u_k(x_k^i)+n \log d_k=n \log \frac{d_k}{\mu_k^i} \to +\infty
\end{equation}
as $k \to +\infty$ in view of \eqref{858}, and \eqref{811} re-writes as
\begin{equation} \label{1842}
\min\{ |y-\frac{x_k^i-x_k^1}{d_k}|^n: i \in I \}  e^{\tilde u_k}  \leq C_R \hbox{ uniformly in } B_R(0) \setminus  \bigcup_{i\in I} B_{\frac{r_k^i}{d_k}}(\frac{x_k^i-x_k^1}{d_k})
\end{equation}
for any $R>0$ thanks to \eqref{1314}. Since $\frac{r_k^i}{d_k} \to 0$ as $k \to +\infty$ in view of \eqref{858}, by \eqref{1845}-\eqref{1842} and the Concentration-Compactness Principle we deduce that 
$$\tilde u_k \to -\infty \hbox{ uniformly on }  B_R(0) \setminus  \bigcup_{i\in I} B_{\frac{1}{4}}(\frac{x_k^i-x_k^1}{d_k})$$
as $k \to +\infty$ and then
\begin{equation} \label{1041}
\lim_{k \to +\infty} \int_{B_{Rd_k}(x_k^1) \setminus \displaystyle \bigcup_{i \in I} B_{\frac{d_k}{4}}(x_k^i)} h_k e^{u_k}=0.
\end{equation}
By \eqref{1038} and \eqref{1041} we finally deduce that
$$ \lim_{k \to +\infty} \int_{B_{Rd_k}(x_k^1)} h_k e^{u_k}=\sum_{i \in I} \beta_i$$
since the balls $B_{\frac{d_k}{4}}(x_k^i)$, $i \in I$, are disjoint.

\medskip \noindent Set $x_k'=x_k^1$, $r_k'=\frac{Rd_k}{2}$ and $\beta'=\displaystyle \sum_{i \in I}\beta_i$. We apply the inductive assumption with the $N-N_0+1$ points $x_k'$ and $\{ x_k^j\}_{j \notin I}$, radii $r_k'$ and $\{r_k^j\}_{j \notin I}$, masses $\beta'$ and $\{\beta_j\}_{j \notin I}$ thanks to the following reduced form of assumption \eqref{811}:
$$\min\{ |x-x_k'|^n,\: |x-x_k^j|^n: \: j \notin I \}  e^{u_k}  \leq C \hbox{ in }B_{2\delta}(0) \setminus  [B_{r_k'}(x_k') \cup \bigcup_{j \notin I} B_{r_k^j}(x_k^j)]$$
provided $R$ is taken sufficiently large. It finally shows the validity of \eqref{tquant} for the index $N$, and the proof is achieved by induction.
\end{proof}

\medskip \noindent We are now in position to establish Theorem \ref{thm:Main} in full generality.
\begin{proof} We first apply Theorem \ref{352} to have a first blow-up description of $u_k$. We have that $\beta_p=N c_n \omega_n$ for all $p \in \mathcal S$ in view of Theorem \ref{3522}, provided we can construct radii $r_k^i$, $i=1,\dots,N$, satisfying \eqref{858} and
\begin{equation} \label{1615}
\lim_{k \to +\infty} \int_{B_{2 r_k^i}(x_k^i)} h_k e^{u_k}=c_n \omega_n.
\end{equation}
Since by \eqref{eq:Peso1} and \eqref{340} we deduce that
\begin{equation} \label{1619}
\int_{B_{R \mu_k^i}(x_k^i)} h_k e^{u_k} \to \int_{B_R(0)}h(p) e^U
\end{equation}
as $k \to +\infty$,  by \eqref{quantization} for all $i=1,\dots N$ we can find $R_k^i \to +\infty$ so that $R_k^i \mu_k^i \leq \delta$ and 
\begin{equation} \label{1931}
\int_{B_{2 R_k^i \mu_k^i}(x_k^i)} h_k e^{u_k} \to c_n \omega_n.
\end{equation}
If $N=1$ we simply set $r_k=R_k \mu_k$ (omitting the index $i=1$). When $N\geq 2$,  by \eqref{339} we deduce that $\mu_k^i=o(d_k^i)$, where $d_k^i=\min \{|x_k^j-x_k^i|: \: j  \not=i \}$, and we can set $r_k^i= \min\{ \sqrt{d_k^i \mu_k^i}, R_k^i \mu_k^i\}$ in this case. By construction the radii $r_k^i$ satisfy \eqref{858} and \eqref{1615} easily follows by \eqref{quantization} and \eqref{1619} and \eqref{1931}, in view of the chain of  inequalities 
$$\int_{B_{R \mu_k^i}(x_k^i)} h_k e^{u_k} \leq  \int_{B_{2 r_k^i}(x_k^i)} h_k e^{u_k} \leq \int_{B_{2 R_k^i \mu_k^i}(x_k^i)} h_k e^{u_k}$$
for all $R>0$ and $k$ large (depending on $R$).
\end{proof}

%%%%%%%%%%%%%%%%%%%%%%%%%%%%
\section{Appendix} \label{Appendix}
\noindent 
%%%%%%%%%%%%%%%%%%%%%%%%%%%%
For the sake of completeness, we give below the proof of Theorem \ref{352}.
\begin{proof}
By the Concentration-Compactness Principle and \eqref{824} we know that 
\begin{equation} \label{0930}
\max_{\overline{B_{2 \delta}(0)}}u_k \to +\infty,\qquad u_k \to -\infty \hbox{ locally uniformly in }\overline{B_{2\delta}(0)} \setminus \{0\}.
\end{equation}
Let $x_k=x_k^1$ be the sequence of maximum points of $u_k$ in $\overline{B_{2\delta}(0)}$: $u_k(x_k)=\displaystyle \max_{\overline{B_{2\delta}(0)}}u_k$. If \eqref{341} does already hold, the result is established by simply taking $k=1$ and $\mu_k=\mu_k^1$ according to \eqref{828}, since \eqref{339} follows by \eqref{0930} and the proof of \eqref{340} is classical and indipendent on the validity of \eqref{341}. Indeed, $U_k(y)=u_k(\mu_k y+x_k)+n \log \mu_k$ satisfies $U_k(y)\leq U_k(0)=0$ in $B_{\frac{2\delta}{\mu_k}}(0)$ and 
\begin{equation} \label{1040}
-\Delta_n U_k=h_k(\mu_k y+x_k) e^{U_k} \hbox{ in }\frac{\Omega -x_k}{\mu_k},\qquad \int_{\frac{\Omega -x_k}{\mu_k} }e^{U_k}= \int_{\Omega} e^{u_k}.
\end{equation}
Since $\frac{\Omega -x_k}{\mu_k} \to \mathbb{R}^n$ as $k \to +\infty$ in view of \eqref{339} and $B_{3\delta}(0) \subset \Omega$, by \eqref{758bis}, \eqref{eq:Peso1} and elliptic estimates \cite{Dib,Tol} we deduce that, up to a subsequence, $U_k \to U$ in $C^1_{loc}(\mathbb{R}^n)$, where $U$ solves \eqref{limitpb} with $p=0$. Notice that $h(0)=0$ would imply that $U$ is an upper-bounded $n-$harmonic function in $\mathbb{R}^n$ and therefore a constant function (see for instance Corollary 6.11 in \cite{HKM}), contradicting $\int_{\mathbb{R}^n} e^U<\infty$. As a consequence, we deduce that $h(0)>0$ and $U$ is the unique solution of \eqref{limitpb} given by \eqref{eq:Bubble} with $p=0$.

\medskip \noindent Assume that \eqref{341} does not hold with $x_k=x_k^1$ and proceed by induction. Suppose to have found $x_k^1,\dots,x_k^l$ so that \eqref{339}-\eqref{340} and \eqref{828} do hold. If \eqref{341} is not valid for $x_k^1,\dots,x_k^l$, in view of \eqref{0930} we construct $\bar x_k \in B_{2\delta}(0)$ as
\begin{equation} \label{0957} 
u_k(\bar x_k)+n \log \min_{i=1,\dots,l} |\bar x_k-x_k^i| =\max_{\overline{B_{2\delta}(0)}} [u_k+n \log \min_{i=1,\dots,l} |x-x_k^i|] \to +\infty
\end{equation}
and have that \eqref{339} is still valid for $x_k^1,\dots,x_k^l,\bar x_k$ with $\bar \mu_k=e^{-\frac{u_k(\bar x_k)}{n}}$ as it follows  by \eqref{340} for $i=1,\dots,l$ and \eqref{0957}. 

\medskip
Let us argue in a similar way as in the proof of Theorem \ref{711}. Observe that
$\displaystyle \min_{i=1,\dots,l} |\bar x_k+\bar \mu_k y-x_k^i| \geq \frac{1}{2} \displaystyle  \min_{i=1,\dots,l} |\bar x_k-x_k^i|$ and
$$u_k(\bar \mu_k y+\bar x_k) +n \log \bar \mu_k \leq n \log \min_{i=1,\dots,l} |\bar x_k-x_k^i|-n \log \min_{i=1,\dots,l} |\bar \mu_k y+\bar x_k-x_k^i| \leq n \log 2$$
for $|y|\leq R_k=\frac{1}{2 \bar \mu_k} \displaystyle \min_{i=1,\dots,l} |\bar x_k-x_k^i|$ in view of \eqref{0957}. Hence $\bar U_k(y)=u_k(\bar \mu_k y+\bar x_k) +n \log \bar \mu_k $ satisfies the analogue of \eqref{1040} with $\bar U_k \leq n \log 2$ in $B_{R_k}(0)$. Since $R_k \to +\infty$ in view of \eqref{339} for $x_k^1,\dots,x_k^l,\bar x_k$, up to a subsequence, by elliptic estimates \cite{Dib,Tol} $\bar U_k \to \bar U$ in $C^1_{loc}(\mathbb{R}^n)$, where $\bar U$ is a solution of \eqref{Limitingn} with $p=0$. By the classification result \cite{Esp} we know that $\bar U=U_{a,\lambda}$ for some $(a,\lambda) \in \mathbb R^n \times (0,\infty)$. Since $\bar U$ is a radially strictly decreasing function with respect to $a$, there exists a sequence $a_k \to a$ as $k \to +\infty$ so that
\begin{equation}\label{1058}
\bar U_k(a_k)=\max_{B_R(a_k)}\bar U_k
\end{equation}
for all $R>0$ and $k$ large (depending on $R$). Setting $x_k^{l+1}=\bar \mu_k a_k+\bar x_k$, since 
$\mu_k^{l+1}=e^{-\frac{u_k(x_k^{l+1})}{n}}$ satisfies
\begin{equation} \label{1152}
\frac{\bar \mu_k}{\mu_k^{l+1}}=e^{\frac{\bar U_k(a_k)}{n}} \to
e^{\frac{\max_{\mathbb{R}^n}  \bar U}{n}}
\end{equation}
as $k \to +\infty$, we deduce that  \eqref{339} is valid for $x_k^1,\dots,x_k^{l+1}$ and \eqref{828} follows by \eqref{1058} with some $R> \displaystyle e^{- \frac{\max_{\mathbb{R}^n}  \bar U}{n}}$. Since $U_k^{l+1}=u_k(\mu_k^{l+1}y+x_k^{l+1})+n \log \mu_k^{l+1}$ satisfies $U_k^{l+1}(y)\leq U_k^{l+1}(0)=0$ in $B_{R \frac{\bar \mu_k}{\mu_k^{l+1}}}(0)$ in view of \eqref{1058}, by 
\eqref{758bis}, \eqref{eq:Peso1}, \eqref{1152} and elliptic estimates \cite{Dib,Tol} we deduce that, up to a subsequence, $U_k^{l+1} \to U$ in $C^1_{loc}(\mathbb{R}^n)$, where $U$ is the unique solution of \eqref{limitpb} given by \eqref{eq:Bubble} with $p=0$, establishing the validity of \eqref{340} for $i=l+1$ too.\\

\medskip \noindent Since \eqref{339}-\eqref{340} and \eqref{828} on $x_k^1,\dots,x_k^l$ imply
\begin{eqnarray*}
\lim_{k \to +\infty} \int_{B_{3\delta}(0)} h_k e^{u_k} \geq \lim_{R \to +\infty} \lim_{k \to +\infty} \sum_{i=1}^l \int_{B_{R \mu_k^i}(x_k^i)} h_k e^{u_k}
= l c_n \omega_n
\end{eqnarray*}
thanks to \eqref{eq:Peso1}, \eqref{quantization} and \eqref{340}, in view of \eqref{824} the inductive process must stop after a finite number of iterations, say $N$, yielding the validity of Theorem \ref{352} with $x_k^1,\dots,x_k^N$.
\end{proof}

\bibliographystyle{plain}

\end{document}